\def \A {{\mathbb A}}
\def \C {{\mathbb C}}
\def \N {{\mathbb N}}
\def \P {{\mathbb P}}
\def \Q {{\mathbb Q}}
\def \R {{\mathbb R}}
\def \Z {{\mathbb Z}}
\def \OO {{\mathcal O}}
\def \d {\,{\rm d}}
\def\re{{\Re e\,}}
\def\Bl{\hbox{{\rm Bl}}}
\def\leq{\leqslant}
\def\geq{\geqslant}
\def\le{\leqslant}
\def\ge{\geqslant}
\theoremstyle{plain}
\newtheorem{theorem}{Theorem}[section]
\newtheorem{proposition}{Proposition}[section]
\newtheorem{lemma}[proposition]{Lemma}
\theoremstyle{remark}
\numberwithin{equation}{section}
\begin{document}

\title[Manin's conjecture for a class of singular cubic hypersurfaces]
{Manin's conjecture for a class of \\ singular cubic hypersurfaces}
\author{Jianya Liu, Jie Wu \& Yongqiang Zhao}

\address{%
Jianya Liu
\\
School of Mathematics
\\
Shandong University
\\
Jinan
\\
Shandong 250100
\\
China} \email{jyliu@sdu.edu.cn}

\address{%
Jie Wu
\\
CNRS\\
Institut \'Elie Cartan de Lorraine\\
UMR 7502\\
54506 Van\-d\oe uvre-l\`es-Nancy\\
France}
\curraddr{%
Universit\'e de Lorraine\\
Institut \'Elie Cartan de Lorraine\\
UMR 7502\\
54506 Van\-d\oe uvre-l\`es-Nancy\\
France
}
\email{jie.wu@univ-lorraine.fr}

\address{%
Yongqiang Zhao
\\
Westlake Institute for Advanced Study
\\
Shilongshan Road, Cloud Town, Xihu District
\\
Hangzhou
\\
Zhejiang Province, 310024    
\\
China} 
\email{yzhao@wias.org.cn}

\date{\today}

\begin{abstract} 
Let $n$ be a positive multiple of $4$.
We establish an asymptotic formula for the number of rational points of bounded height on 
singular cubic hypersurfaces $S_n$ defined by 
$$
x^3=(y_1^2 + \cdots + y_n^2)z .
$$  
This result is new in two aspects: first, it can be viewed as a modest start on the study of density 
of rational points 
on those singular cubic hypersurfaces which are not covered by the classical theorems 
of Davenport or Heath-Brown; second, it proves Manin's  conjecture  for singular 
cubic hypersurfaces $S_n$ defined above.  

\end{abstract}

\subjclass[2000]{11D45, 11N37}
\keywords{Cubic hypersurface; Manin's conjecture; rational point; asymptotic formula. }   

\maketitle   	
   	
\section{Introduction}
\subsection{The result} 
The aim of the paper is to study the density of 
rational points on the cubic hypersurfaces
$S_n$ defined by  
\begin{equation}\label{def:Sn}
x^3=(y_1^2+\cdots+y_{n}^2)z, 
\end{equation}
where $n\geq 3$ is an integer. 
It is well-known that for any $S_n$ with $n\geq 3$,   
the heuristic of the circle method does not apply, since there are too many 
solutions with $x=z=0$. One 
therefore counts such solutions of  \eqref{def:Sn} that neither $x$ nor 
$z$ vanishes.  

If a point in $\P^{n+1}$  is represented by $(x, y_1, \dots, y_n, z) \in \Z^{n+2} $ with
coprime coordinates, then 
\begin{equation}\label{def:Height}
H(x: y_1: \ldots : y_n : z) = \max\{|x|, \textstyle\sqrt{y_1^2+\cdots+y_n^2}, |z| \}^{n-1} 
\end{equation} 
is a natural anticanonical height function on $S_n(\Q)$.  
Let $N_n(B)$  denote the number of rational points on
\eqref{def:Sn} satisfying 
\begin{equation}\label{def:Box}
H(x: y_1: \ldots : y_n : z)\leq B, \;
x\not=0, \;
z\not=0. 
\end{equation}
In the classical setting of counting of integral solutions of \eqref{def:Sn} 
by the circle method,  
one usually counts solutions without the coprime condition. We therefore let 
\begin{equation}\label{def:Affineheight}
H^*(x, y_1, y_2, \ldots , y_n, z) = \max\{|x|, \textstyle\sqrt{y_1^2+\cdots+y_n^2}, |z| \}
\end{equation} 
for any point $(x, y_1, y_2, \ldots , y_n, z)\in {\Z}^{n+2}$, 
and we denote accordingly  by $N^*_n(B)$ the number of integral solutions of 
\eqref{def:Sn} satisfying 
\begin{equation}\label{def:Box*}
H^*(x, y_1, \ldots , y_n , z)\leq B, \;
x\not=0, \;
z\not=0.
\end{equation}
These two height functions $H$ and $H^*$ are closely related. 
The purpose of this paper is to establish an asymptotic formula for $N_n^*(B)$, 
and hence deduce an asymptotic formula for $N_n(B)$, 
as $B\to \infty$.  

One sees easily that the above $H$ and $H^*$  
are induced by the norm  
$\| \cdot \|: {\Bbb R}^{n+2} \to {\Bbb R}_{\geq 0}$ defined as  
$\|(x, y_1, \ldots, y_n, z)\|= \max\{|x|, \sqrt{y_1^2+\cdots+y_n^2}, |z|\}$. 
Of course it is possible to use height functions other than $H$ or $H^*$, 
but it turns out that 
these specific $H$ and $H^*$ are more natural. One observes that, 
in the affine space $\A^{n+2}$, counting integral solutions of 
\eqref{def:Sn} with bounded height $H^*(P)\leq B$ 
is equivalent to counting  points inside the poly-cylinder  
$[-B, B]^2 * \text{\rm Ball}(0, B)$. 

The main results of the paper are asymptotic formulae for $N_n(B)$ and 
$N^*_n(B)$ when $n$ is a multiple of $4$.  
Our method works well for all integers $n\geq 3$, 
but for ease of presentation we just focus on the case when is $n$ is a multiple 
of $4$, and leave the general case to another occasion.  

For ease of presentation, we will give detailed proof of the following 
Theorem~\ref{thm1} which corresponds to  the typical case $n=4$. 
The general case $n=4k$ can be 
treated in the same way, and only slight modifications are necessary;  see 
Theorem~\ref{thm5} and its proof in \S\ref{GeneralCase: Sn}. 

\begin{theorem}\label{thm1} 
As $B\to\infty$, we have 
\begin{equation}\label{eq:N4B}
N_4(B)
= \mathcal{C}_4 B(\log B)^2 \, \bigg\{1+O\bigg(\frac{1}{\log B}\bigg)\bigg\} 
\end{equation}
and 
\begin{equation}\label{eq:N4*B}
N^*_4(B)
= \mathcal{C}^*_4 B^3(\log B)^2 \, \bigg\{1+O\bigg(\frac{1}{\log B}\bigg)\bigg\}, 
\end{equation}
where
\begin{equation}\label{def:CS4}
\mathcal{C}_4 := \frac{\mathcal{C}^*_4}{9\zeta(3)},  
\qquad
\mathcal{C}^*_4 := \frac{16}{3} \mathscr{C}_4, 
\end{equation}
and $\mathscr{C}_4$ is the positive constant defined as in \eqref{def:a2}, and $\zeta$ is the 
Riemann zeta-function. 
\end{theorem}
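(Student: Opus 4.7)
The plan is to establish the asymptotic for the affine count $N_4^*(B)$ by reducing it to an arithmetic divisor sum via Jacobi's four-square identity, and then to deduce the asymptotic for $N_4(B)$ by a M\"obius inversion over the gcd of the coordinates. For fixed $(x,z)\in(\Z\setminus\{0\})^2$ with $z\mid x^3$ and $\mathrm{sgn}(x)=\mathrm{sgn}(z)$, the number of $(y_1,\ldots,y_4)\in\Z^4$ with $y_1^2+\cdots+y_4^2=x^3/z$ equals the four-square representation number $r_4(x^3/z)$. Using the sign involution $(x,\mathbf{y},z)\leftrightarrow(-x,\mathbf{y},-z)$ to reduce to $x,z>0$, and then applying Jacobi's formula $r_4(m)=8\sum_{d\mid m,\,4\nmid d}d$ with $m=x^3/z=de$ and $k=z$, one arrives at
\begin{equation*}
N_4^*(B)=16\sum_{\substack{x,d,e,k\ge 1,\;dek=x^3,\;4\nmid d\\ x\le B,\;k\le B,\;de\le B^2}}d.
\end{equation*}

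The analytic heart of the proof is the asymptotic evaluation of this quadruple sum. The unrestricted weighted convolution $N\mapsto\sum_{dek=N}d$ has Dirichlet series $\zeta(s-1)\zeta(s)^2$, further twisted by $(1-4^{1-s})$ to enforce $4\nmid d$; the cube constraint $N=x^3$ is treated locally via an Euler-product computation, since at each prime $p$ it becomes $v_p(d)+v_p(e)+v_p(k)\equiv 0\pmod{3}$. The three height conditions $x\le B$, $k\le B$, $de\le B^2$ are handled by splitting the outer sum into regions according to which condition is binding, and applying the Dirichlet hyperbola method (or a Perron-type formula) in each region. Combining the contributions produces the main term
\begin{equation*}
N_4^*(B)=\mathcal{C}_4^*\,B^3(\log B)^2\bigl(1+O(1/\log B)\bigr),\qquad\mathcal{C}_4^*=\tfrac{16}{3}\mathscr{C}_4,
\end{equation*}
with $\mathscr{C}_4$ arising as an explicit convergent Euler product.

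To pass from $N_4^*(B)$ to $N_4(B)$, every integer solution decomposes uniquely as $v=\ell v_0$ with $\ell\ge 1$ and $v_0$ a primitive integer representative of a rational point, so that (with appropriate sign conventions) $N_4^*(B)=\sum_{\ell\ge 1}P(B/\ell)$, where $P(T)$ counts primitive representatives with $\max\le T$. M\"obius inversion yields an asymptotic for $P$ with an extra factor $1/\zeta(3)$, and the identity $N_4(B)=P(B^{1/(n-1)})=P(B^{1/3})$ together with $(\log B^{1/3})^2=(\log B)^2/9$ gives the claimed $\mathcal{C}_4=\mathcal{C}_4^*/(9\zeta(3))$.

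The main obstacle I anticipate is the asymptotic evaluation of the quadruple sum. The interplay between the local cube constraint $dek=x^3$ (multiplicative, prime-by-prime) and the global non-product-like height region is what encodes the three effectively free parameters producing the $(\log B)^2$ factor. Extracting this factor with the explicit constant $\mathscr{C}_4$ and the stated relative error $O(1/\log B)$ requires delicate contour or hyperbola arguments together with careful control of secondary terms across the chambers of the height region.
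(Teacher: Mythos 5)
Your combinatorial reduction is sound and essentially the same as the paper's: you pass from the lattice-point count to $r_4(x^3/z)$, use Jacobi's formula $r_4=8r_4^*$, reparametrize to a sum of the form $\sum_{n\le B}\sum_{d\mid n^3,\ n^3/B\le d\le B^2} r_4^*(d)$ (your $\ell m k = x^3$ is the same thing after expanding $r_4^*(d)=\sum_{\ell\mid d,\,4\nmid\ell}\ell$), and your M\"obius/rescaling passage from $N_4^*$ to $N_4$ matches the paper's deduction of \eqref{eq:N4B} from \eqref{eq:N4*B}, with the $1/\zeta(3)$ and $1/9$ coming from exactly the same sources. So the skeleton is right.

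The gap is in the analytic core, which you explicitly leave as an anticipated ``obstacle.'' Two specific ideas are missing and are not replaceable by ``hyperbola method across chambers.'' First, the single-variable series $\zeta(s-1)\zeta(s)^2$ for $N\mapsto\sum_{dek=N}d$ does not cleanly encode the three height conditions, because they constrain $x$, $k$ and $de$ separately rather than $N=x^3$; the paper instead uses the genuinely two-variable series $\mathcal{F}(s,w)=\sum_n n^{-s}\sum_{d\mid n^3}r_4^*(d)d^{-w}$, which factors as $\prod_{0\le j\le 3}\zeta(s+jw-j)\,\mathcal{G}(s,w)$ and keeps the $x\le B$ and $d\le B^2$ ranges independent. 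Second, even with the right Dirichlet series, a direct double Perron and contour shift gives no usable error term here (the inner sum $\sum_{d\mid n^3}$ is far too sparse for power saving); the paper first evaluates the doubly averaged quantity $M(X,Y)=\int_1^Y\!\int_1^X S(x,y)\,\mathrm{d}x\,\mathrm{d}y$ and then recovers $S$ via a finite-difference operator $\mathscr{D}$ and a careful optimization of the increment parameters $H,J$ (Lemmas~\ref{Operator/1}--\ref{Operator/2}, Theorem~\ref{thm2}). On top of this, the condition $z\le B$ translates to $d\ge n^3/B$, a lower bound that depends on the outer variable $n$; this forces the separate quantity $T(B)=\sum_{n\le B}\sum_{d\mid n^3,\ d<n^3/B}r_4^*(d)$, which Theorem~\ref{thm2} cannot evaluate directly and which the paper handles by a geometric partition $\delta^{k}B<n\le\delta^{k-1}B$ with $\delta=1-1/\log B$, summing the piecewise applications of Theorem~\ref{thm2} (Theorem~\ref{thm3}). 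Without these three ingredients --- the two-variable Dirichlet series, the Ces\`aro averaging and difference operator, and the geometric decomposition for the $n$-dependent range --- your proposal does not actually establish the $O(1/\log B)$ relative error in \eqref{eq:N4*B}, so the claimed asymptotic remains unproved.
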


Theorem~\ref{thm1} can be viewed in two perspectives, the first of which 
is cubic forms representing zero, and the second is Manin's conjecture. 

\subsection{Cubic forms}  
We put Theorem~\ref{thm1} in the perspective of 
cubic forms representing zero. Let $C(x_1, \dots, x_s)\in {\Bbb Z}[x_1, \dots, x_s]$ 
be a cubic form of $s$ variables. 
Then Davenport \cite{Dav} showed that there exits a non-zero integral vector ${\bf x}$ 
such that $C({\bf x})=0$ provided that $s$ is at least $16.$  This $16$ is reduced to 
$14$ by Heath-Brown \cite{HeaBro}. In \cite{Dav} and \cite{HeaBro}, two alternative 
cases have been considered separately.  

To state the first alternative, we need to introduce a Geometric Condition of 
Davenport, in terms of the Hessian 
$$
\mathrm{Hess}(C)=\bigg (\frac{\partial ^2 C}{\partial x_i \partial x_j} \bigg) 
$$
of a given cubic form $C=C({\bf x})$. We remark that if one writes $C({\bf x})$ in the  form 
$$
C(x_1, \dots, x_s)=\sum_{i,j,k}c_{ijk}x_ix_jx_k 
$$
so that the coefficients $c_{ijk}$ are symmetric in the indices $i, j, k$, then up to a constant
$\mathrm{Hess} (C)$ is equal to the following matrix 
$$
M({\bf x})=\bigg(\sum_{k}c_{ijk}x_k\bigg),
$$
which is used in the work of Davenport \cite{Dav} and Heath-Brown \cite {HeaBro}. 

\medskip
\noindent 
{\sc Geometric Condition (G).} \textit{The estimate 
\begin{equation} \label{G}
\# \{{\bf x}\in  {\Bbb Z}^s: |{\bf x}|\leq B, \, \mathrm{rank}({\mathrm{Hess}(C)})=r\}\ll_\varepsilon B^{r+\varepsilon}   
\end{equation} 
holds for all nonnegative integers $r\leq s$. Here $|{\bf x}|\leq B$ means that 
each coordinate $x_j$ of ${\bf x}$ satisfies $|x_j|\leq B$. } 

\medskip

In the first alternative,  Davenport and Heath-Brown established an asymptotic formula for the number of solutions of  cubic forms $C({\bf x})$ satisfying the Geometric Condition (G), 
and  with $s\geq 16$ and $s \geq 14$, respectively.  

While in the second alternative,  i.e. if \eqref{G} 
fails for some nonnegative integer $r\leq s$, it is only proved that 
the form $C({\bf x})$ has at least one non-trivial zero for geometric reasons, and therefore  it 
leaves the question of establishing  an asymptotic formula for the number of its 
zeros untouched. In addition to the desire for a complete theory in the second alternative,   
a study of rational points on cubic hypersurfaces in the second alternative will also 
supply a flourishing testing ground for general versions of Manin's conjecture. This 
explains the motivation of this paper.

Now let us take a closer look at the Geometric Condition (G).   
From the analytic point of view, 
it is a sufficient condition to guarantee the desired cancellation from the exponential sum 
$$
\sum_{|{\bf x}|\leq B}\exp\big(2\pi \mathrm{i} C({\bf x})\alpha\big) 
$$
in the circle method;  while from the geometric point of view, in some sense, 
it is a quantitative  measure of the largest 
possible dimension of  all linear subspaces that can be embedded into 
the hyperspace $C({\bf x})=0$.  For example, 
we have the following result in an extremal case.  
 
 \begin{lemma}
Let $C({\bf x})$ be a cubic form with $s\geq 6$ variables, and suppose that 
the cubic hypersurface $C({\bf x})=0$ contains a codimension two linear subspace defined over $\Q$. 
Then \eqref{G} must fail for some nonnegative integer $r\leq s$. 
\end{lemma}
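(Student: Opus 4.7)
The plan is to use the codimension-two subspace $L$ itself as a reservoir of integer points on which $\mathrm{Hess}(C)$ is rank-deficient, and then to argue that these are already too plentiful to be compatible with (G). Pick two independent $\Q$-linear forms $\ell_{1},\ell_{2}$ cutting out $L = \{\ell_{1}=\ell_{2}=0\}$; then $\dim L = s-2$ and its Euclidean orthogonal $L^{\perp} = \mathrm{span}(\nabla\ell_{1},\nabla\ell_{2})$ has dimension $2$.

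The key rank bound comes from polarising the identity $C \equiv 0$ on $L$: the symmetric trilinear form $B(\mathbf{u},\mathbf{v},\mathbf{w}) = \mathbf{v}^{T} M(\mathbf{u})\mathbf{w}$ attached to $C$ then vanishes on $L\times L\times L$, which is to say $M(\mathbf{u})(L)\subseteq L^{\perp}$ for every $\mathbf{u}\in L$. Since this restricted linear map sends an $(s-2)$-dimensional space into a $2$-dimensional one, its kernel has dimension at least $s-4$; as this kernel is contained in $\ker M(\mathbf{u})$, we obtain
\[
\mathrm{rank}\, M(\mathbf{u}) \le 4 \quad\text{for every } \mathbf{u}\in L.
\]

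Since $L$ is $\Q$-rational of dimension $s-2$, an elementary lattice-point count gives $\#\{\mathbf{u}\in L\cap\Z^{s}:|\mathbf{u}|\le B\}\gg B^{s-2}$, and each such $\mathbf{u}$ lies in the stratum $V_{4}=\{\mathbf{x}:\mathrm{rank}\,M(\mathbf{x})\le 4\}$. If (G) held for every $r$, summing the hypothesized estimate over $r=0,1,2,3,4$ would yield $\ll_{\varepsilon}B^{4+\varepsilon}$ lattice points in $V_{4}$; this is flatly incompatible with $B^{s-2}$ as soon as $s\ge 7$, and pigeonhole then singles out an explicit $r\le 4$ at which (G) fails. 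The delicate case is the threshold $s=6$, where $\dim L$ just meets the rank bound and the crude count $B^{4}$ does not beat $B^{4+\varepsilon}$; there I would descend one stratum further, exploiting the explicit formula $M(\mathbf{u})=\alpha p(\mathbf{u})^{T}+p(\mathbf{u})\alpha^{T}+\beta q(\mathbf{u})^{T}+q(\mathbf{u})\beta^{T}$ on $L$ (arising from $C=\ell_{1}Q_{1}+\ell_{2}Q_{2}$ with $\alpha=\nabla\ell_{1}$, $\beta=\nabla\ell_{2}$, $p(\mathbf{u})=\nabla Q_{1}(\mathbf{u})$, $q(\mathbf{u})=\nabla Q_{2}(\mathbf{u})$) to analyse the determinantal strata of the symmetric matrix $M(\mathbf{u})$ on $L$ and to locate a lower-rank sublocus whose lattice-point count still exceeds what (G) allows at that smaller $r$. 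This boundary analysis is the step I expect to require the most care.
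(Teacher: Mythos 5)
Your argument is essentially the paper's own approach in coordinate-free clothing. The paper sketches: change coordinates so that the codimension-two space is $L=\{x_1=x_2=0\}$, use the Nullstellensatz to write $C=x_1Q_1-x_2Q_2$, and read off from the explicit Hessian that on $L$ the matrix $\mathrm{Hess}(C)$ is supported on the first two rows and columns, hence has rank at most $4$, before doing the lattice-point count. Your polarization argument (the trilinear form $B$ vanishes on $L^3$, so $M(\mathbf{u})$ maps $L$ into the two-dimensional annihilator of $L$, giving $\ker M(\mathbf{u})\supseteq\ker(M(\mathbf{u})|_L)$ of dimension $\ge s-4$) reaches the identical rank bound, and the pigeonhole over $r=0,\dots,4$ is the same counting step. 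For $s\ge 7$ your proof is complete and matches the paper.

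The genuine issue is the endpoint $s=6$, which you correctly flag but do not close. There $\dim L=4$ exactly saturates the rank bound, so the count on $L$ gives $\gg B^4$ points with $\mathrm{rank}\le 4$ while (G) only forbids $\gg B^{4+\delta}$; this is not a contradiction. Your proposed remedy — descend to a lower-rank stratum of $M(\mathbf{u})=\alpha p(\mathbf{u})^T+p(\mathbf{u})\alpha^T+\beta q(\mathbf{u})^T+q(\mathbf{u})\beta^T$ inside $L$ and find an excess there — does not obviously work. Take $C=x_1(x_3^2+x_4^2)+x_2(x_5^2+x_6^2)$ with $s=6$: on $L$ the rank is $4$ whenever $(x_3,x_4)\ne 0$ and $(x_5,x_6)\ne 0$, and it drops to $2$ only on the two $2$-planes $\{x_3=x_4=0\}$, $\{x_5=x_6=0\}$, so the lower strata on $L$ contribute only $O(B^2)$ points; a direct check of the block structure off $L$ suggests every rank stratum of this form stays within the $B^{r+\varepsilon}$ budget. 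So the descent cannot be carried out in general, and at the level of the sketched argument the lemma actually seems to require $s\ge 7$. The paper itself defers ``direct calculations'' to the unpublished reference \cite{Zh}, so it is possible an extra idea is used there at $s=6$; but as written, both the paper's sketch and your proposal have the same unresolved boundary case, and you were right to single it out as the step requiring the most care.
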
 

The verification of this lemma is straightforward.  After a change of coordinates,  
one may assume that the codimention two linear space
is given by $x_1=x_2=0 $.  Thus, by Hilbert's Nullstellensatz,  $C({\bf x})=0$ can now 
be written as 
\begin{equation}\label{codim2}
x_1Q_1(x_1, \dots, x_s)=x_2Q_2(x_1, \dots , x_s),  
\end{equation} 
where $Q_1$ and $Q_2$ are quadratic forms. 
From this, the lemma follows by direct calculations.  
See \cite{Zh} for details and further discussions.  

\medskip 

Thus, to conduct an investigation of asymptotic formula of rational points on 
cubic hypersurfaces in the second alternative, it seems natural to start with those of the form  \eqref{codim2}. 
A hypersurface \eqref{codim2} splits into a two-parameter family of affine 
quadrics, while the arithmetic of quadrics are well studied for centuries. It is  
therefore possible to apply, among other things, the theory of quadratic forms 
to study the density of rational points on \eqref{codim2}. 

In this paper, we pursue such an investigation when   
$C({\bf x})=0$ can further be written as 
\begin{equation}\label{Spec/Form}
x^3=Q(y_1, \ldots, y_n)z 
\end{equation} 
where $Q$ is a definite quadratic form. To simplify the details,   
we assume in particular that $Q$ takes the diagonal form $y^2 _1+\dots +y^2_n$. 
We remark that the underlying idea in treating this special case works well, 
at least in principle,  for the equation \eqref{Spec/Form}. And with more 
efforts, the same idea can be applied to establish asymptotic formulae for density 
of rational points on 
some higher-degree hypersurfaces like 
\begin{equation}\label{Sp/Form/4}
x^d=Q(y_1, \ldots, y_n)z^{d-2} 
\end{equation} 
where as before $Q$ is a positive definite quadratic form. See the forthcoming work \cite{LWZ}.

\subsection{Each $S_n$ is in the  second alternative whenever $n\geq3$}   
In this subsection, we check that, for each $s\geq 5$, each cubic hypersurface of the form 
\begin{equation} \label{SSform}
C({\bf x}):=Q(x_1, \ldots, x_s)x_2 -x_1^3 =0
\end{equation}
belongs to the second alternative,  where $Q$ is a quadratic form. Here one observes  
that \eqref{SSform} is more general since the quadratic form $Q$ may depend even on $x_1$ and $x_2$. 
In particular,  our cubic hypersurfaces
$S_n$ in \eqref{def:Sn},  with $n\geq 3$, are of the form \eqref{SSform}, and so Theorem~\ref{thm1}  is new.  
To this end,  
 we compute that 
\begin{equation}
\mathrm{Hess}(C)=
\begin{pmatrix} 
C_{11} & C_{12} & x_2Q_{13} &  \dots   & x_2Q_{1s}\\
C_{21} & C_{22}  & C_{23}  &  \dots&  C_{2s}  \\
 x_2Q_{31} & C_{32}  & x_2Q_{33}  &  \dots & x_2Q_{3s}   \\
\vdots&\vdots&\vdots&{}& \vdots \\\noalign{\vskip 1mm}
 x_2Q_{s1}  & C_{s2}  & x_2Q_{s3}   & \dots & x_2Q_{ss} \\
\end{pmatrix}, 
\end{equation}
where $C_{ij}$ stands for $\partial ^2 C/\partial x_i \partial x_j$ as usual, 
and $Q_{ij}$ has the same meaning.  
If $x_2=0$, then $\mathrm{rank}(\mathrm{Hess}(C))\leq 3$, and therefore, for some $r\leq 3$,  
\begin{eqnarray*}
&& \# \{( x_1, \cdots, x_s) \in  {\Bbb Z}^{s}: |x_i| \leq B, \,  \mathrm{rank}({\mathrm{Hess}(C)})=r\} \\ 
&&\gg B^{s-1}    
>B^{3+\varepsilon}
\end{eqnarray*}
provided $s\geq 5 $. This verifies that for each $s \geq 5$,  cubic hypersurface of the form \eqref{SSform}  falls into the second 
alternative.  

\subsection{Manin's conjecture}\label{ManinSub}
The second perspective from which Theorem~\ref{thm1} can be 
viewed is Manin's conjecture. Manin  \cite {BM} has put forward a fundamental conjecture relating the geometry of a projective variety to the distribution of its rational points. 
The original conjecture was formulated for smooth Fano varieties, 
and the number of log-powers in an asymptotic formula for the 
density of rational points is one off the rank of the Picard group. 
This has been generalized to a large class of singular Fano varieties by 
Batyrev and Tschinkel  in \cite{BT2}. Before we state this generalized  
Manin conjecture for $S_n$, let us recall the following definitions; see e.g. \cite{BT2} and \cite{Ya}. 

A normal irreducible algebraic variety $W$ is said to have 
at worst canonical singularities if $K_W$ is a $\Q$-Cartier 
divisor and if for some 
resolution of singularities $\phi : X \to W $,   one has 
$$
K_X={\phi}^* (K_W)+D, 
$$ 
where $D$ is an effective 
$\Q$-Cartier divisor. Irreducible components of the exceptional locus of $\phi$ which are not contained in the support of $D$ are called $\textit{crepant  divisors} $ of the resolution $\phi $. 
Given  $\phi :  \widetilde{S}_n \to S_n$  a resolution of 
singularities,  we denote the number of crepant divisors over $\Q$ by $\gamma_n$. Note that $\gamma_n$ is independent of the particular resolution we choose.  Also, we let
$ r_n :=\mathrm{rank}_{\Q}  \big(\text{Pic}(S_n  )\big)  $ be the Picard rank of $S_n$.  
For our $S_n$,  Manin's conjecture predicts the 
asymptotic formula 
\begin{equation}\label{MConjecture:Sn}
N_n(B)\sim C_n B(\log B)^{r_n + \gamma_n-1}
\end{equation}
as $B\to\infty$ for the quantity $N_n(B)$ defined in 
\eqref{def:Box}, where $C_n$ is a positive constant.  See Conjecture 5.6 in \cite{Ya} for the general statement. 
In \S\ref{Resolution}, we will show that $r_n=1$  and $\gamma_n=2$ whenever 
$n\geq 3$.  Hence Theorem~\ref{thm1} proves  the 
conjecture \eqref{MConjecture:Sn}. 

\medskip 

Progresses towards Manin's conjecture have been made 
for surfaces. A number of typical cases have been verified by Browning, 
de la Bret\`eche, Derenthal, Peyre and others; see the survey \cite{Br} 
for further references.  Another class of varieties have been 
extensively studied are varieties with many symmetries, e.g. toric varieties; 
see the papers \cite{FMT, BT1,  CLT} and the book  \cite{Ja} for further information. Besides these two classes of varieties, progresses have also 
been made on higher-dimensional varieties.  One example is the result 
on Segre cubic by de la Bret\`eche  \cite{Br}.  The other two examples 
are both on cubic fourfolds, 
by Schmidt \cite{Sc} and by Blomer, Br\"udern and Salberger \cite{BBS},  
respectively. The main result of this paper gives another class of 
higher-dimensional varieties on which Manin's conjecture holds. 

\medskip

We conclude this subsection by a brief discussion of  some related results. 
Other than our $S_n$ with $n\geq 3$,  the surface 
$$
S_2 : \  x^3=(y_1 ^2 + y_2 ^2)z
$$  
enjoys an  
additional  toric structure,  which is non-split over $\Q$, and therefore Manin's 
conjecture for $S_2$ follows from the general result of Batyrev and 
Tschinkel \cite{BT1}.  The closely related split toric surface
$$S'_2 :   \hskip 3mm  x^3=yzw$$  
is well studied by a number of authors.  Again, Manin's conjecture for 
$S_2'$ is a consequence of 
Batyrev and Tschinkel \cite{BT1}.  Other 
authors  include de la Bret\`eche \cite{Breteche1998},   de la 
Bret\`eche and Swinnerton-Dyer  \cite{BSD}, 
Fouvry  \cite{Fo}, Heath-Brown and Moroz \cite{HBM}  and Salberger  \cite{Sa}.  
Of the unconditional  asymptotic formulae obtained, the strongest one is 
in \cite{Breteche1998}, which
gives the estimate
$$ 
N_U(B)=BP(\log B) + O\big(B^{7/8}\exp(-c(\log B)^{3/5}(\log\log B)^{-1/5})\big),
$$ 
where $U$ is a Zariski  open subset of $S'_2$, and $P$ is a 
polynomial of degree $6$ and $c$ is a positive constant.
In \cite{BSD}, even the second term of the counting function $N_U(B)$ is established 
under the Riemann Hypothesis as well as the assumption 
that all the zeros of the Riemann zeta-function are simple.

\subsection{Outline of the proof of Theorem~\ref{thm1}} 
We are going to use the arithmetic function $r_4(d)$ defined as the 
the number representations of a positive integer $d$ as the sum of four 
squares 
\begin{equation}\label{def:r4}
d=y_1^2+\cdots+y_4^2
\quad\text{with}\quad
(y_1, \dots, y_4)\in \Z^4. 
\end{equation}
It is well-known (cf. \cite[(3.9)]{Grosswald1985}) that 
\begin{equation}\label{def:r4*}
r_4(d) = 8r_4^*(d)
\quad\text{with}\quad
r_4^*(d) := \sum_{\substack{\ell\mid d\\ \ell\not\equiv 0 ({\rm mod}\,4)}} \ell.  
\end{equation}
In view of the above, we can write
\begin{equation}\label{decomposition:N4*B}
\begin{aligned}
N^*_4(B)
& = 2\sum_{n\le B} \sum_{\substack{d\mid n^3\\ n^3/B\le d\le B^2}} r_4(d)
= 16 \sum_{n\le B} \sum_{\substack{d\mid n^3\\ n^3/B\le d\le B^2}} r_4^*(d)
\\
& = 16 \bigg(\sum_{n\le B} \sum_{\substack{d\mid n^3\\ d\le B^2}} r_4^*(d)
- \sum_{n\le B} \sum_{\substack{d\mid n^3\\ d<n^3/B}} r_4^*(d)\bigg).
\end{aligned}
\end{equation}
Hence to prove \eqref{eq:N4*B} in Theorem~\ref{thm1}, it is sufficient to establish 
asymptotic formulae for the following two quantities
\begin{equation}\label{def:Sxy}
S(x, y)
:= \sum_{n\le x} \sum_{\substack{d\mid n^3\\ d\le y}} r_4^*(d)
\quad\text{and}\quad
T(B)
:= \sum_{n\le B} \sum_{\substack{d\mid n^3\\ d<n^3/B}} r_4^*(d).
\end{equation}
In the above definition of $S(x, y)$, we have used $x$  
instead of the commonly used letter $B$, since 
in the proof we need to take integration with respect to $x$.  

In \S\S\ref{Dirichlet}-\ref{PfThm2} we shall apply analytic methods to 
establish an asymptotic formula for $S(x, y)$ 
as shown in the following. 

\begin{theorem}\label{thm2}
Let $\varepsilon>0$ be arbitrary. We have 
\begin{equation}\label{Evaluation:Sxy}
S(x, y)  
= xy \bigg(4P(\psi) + \frac{4}{3}P'(\psi)-\frac{1}{3}P''(\psi)\bigg) 
+ O_{\varepsilon}\big(x^{\frac{3}{2}} y^{\frac{3}{4}} 
+ x^{\frac{1}{2}+\varepsilon} y^{\frac{7}{6}}\big)
\end{equation}
uniformly for $x^3\ge y\ge x\ge 10$,
where $\psi:=\log x - \tfrac{1}{3}\log y$ and $P(t)$ is a quadratic 
polynomial, defined as in \eqref{def:Pt} below.
In particular, we have
\begin{equation}\label{Cor:Sxy}
S(x, y)  
= 4 \mathscr{C}_4 xy 
\bigg(\log x-\frac{1}{3}\log y\bigg)^2 \, \bigg\{1 + O\bigg(\frac{1}{\log x}\bigg)\bigg\}
\end{equation}
uniformly for $x\ge 10$ and $x^2 (\log x)^{-8}\le y\le x^2(\log x)^{4/3}$,
where 
\begin{equation}\label{def:a2}
\mathscr{C}_4 := \frac{81}{512} \zeta(4)
\prod_{p>2} \bigg(1 + \frac{2}{p} + \frac{3}{p^2}+ \frac{2}{p^3} + \frac{1}{p^4}\bigg) \bigg(1 - \frac{1}{p}\bigg)^2
\end{equation}
is the leading coefficient of $P(t)$.
\end{theorem}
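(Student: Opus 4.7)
I would use a two-variable Dirichlet-series approach with Perron's formula. The key observation is that $d \mid n^{3}$ is equivalent to $f(d) \mid n$, where $f(d) := \prod_{p} p^{\lceil v_{p}(d)/3\rceil}$, so that
\[
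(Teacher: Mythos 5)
Your proposal is truncated after the opening sentence --- the display math following ``so that'' is never completed, and no actual proof follows. What little is written does announce the same general strategy as the paper (a two-variable Dirichlet series attacked via Perron's formula), and your observation that $d\mid n^{3}$ is equivalent to $f(d)\mid n$ with $f(d)=\prod_{p}p^{\lceil v_{p}(d)/3\rceil}$ is correct. In fact, interchanging the two sums under this reformulation shows that the paper's double Dirichlet series satisfies
\[
\mathcal{F}(s,w)=\sum_{n\ge 1}n^{-s}\sum_{d\mid n^{3}}\frac{r_4^{*}(d)}{d^{w}}
=\zeta(s)\sum_{d\ge 1}\frac{r_4^{*}(d)}{f(d)^{s}d^{w}},
\]
so your device cleanly isolates the $\zeta(s)$ factor of $\mathcal{F}(s,w)$ that the paper instead obtains by direct Euler-product computation. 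That is a nice alternative way to see part of the structure, but it does not by itself constitute a different method.

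What is missing is the entire analytic core, and this is where the paper spends essentially all of its effort. A complete argument must (i) establish the full factorization $\mathcal{F}(s,w)=\prod_{0\le j\le 3}\zeta(s+jw-j)\,\mathcal{G}(s,w)$ with $\mathcal{G}$ holomorphic and bounded in a suitable two-variable region (the paper's Lemma~\ref{Lem:Fsw}); (ii) cope with the fact that a single application of Perron to $S(x,y)$ gives error terms far too large in the relevant range $y\asymp x^{2}$, which the paper handles by first evaluating the doubly smoothed average $M(X,Y)=\int_1^Y\int_1^X S\,\mathrm{d}x\,\mathrm{d}y$ (Lemma~\ref{Perron_Formula:MXY}); (iii) shift the $w$-contour past the three moving poles $w_j(s)=(j+1-s)/j$ of the integrand, then shift the $s$-contour in each resulting single integral (Lemmas~\ref{Lem:MXYTU}--\ref{Lem:Evaluate_I3}); and (iv) convert the asymptotic for $M(X,Y)$ into one for $S(x,y)$ via the differencing operator $\mathscr{D}$ (Lemmas~\ref{Operator/1}--\ref{Lem3.2}), choosing $H,J$ to balance the various error contributions. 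None of these steps is sketched in your proposal, and each involves a genuine idea (in particular the smoothing-then-differencing step is not a routine application of Perron). As written, the proposal therefore has a gap of the size of the whole proof; to credit it, you would at minimum need to explain how you plan to control the error terms in the range $x^{2}(\log x)^{-8}\le y\le x^{2}(\log x)^{4/3}$, since that is precisely where the naive approach breaks down.
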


The asymptotic formula \eqref{Evaluation:Sxy} is valid 
for $x^2 (\log x)^{-8}\le y\le x^{3-\varepsilon}$, which is 
sufficient for the proof of Theorem \ref{thm1}. Of course it 
is possible to extend \eqref{Evaluation:Sxy} to a wider range of $y$,  
but we shall not get into this.    

Now we turn to the evaluation of $T(B)$.   Theorem~\ref{thm2} does not apply to $T(B)$ directly since the range of its second summation depends on the variable $n$ of the first summation. Fortunately we can show that Theorem~\ref{thm2} together with some 
delicate analysis is sufficient to establish the following result.

\begin{theorem}\label{thm3}
As $B\to\infty$, we have
\begin{equation}\label{Evaluation:TB}
T(B)
= \frac{1}{9} \mathscr{C}_4 B^3(\log B)^2
\bigg\{1+O\bigg(\frac{1}{\log B}\bigg)\bigg\}, 
\end{equation}
where $\mathscr{C}_4$ is as in \eqref{def:a2}. 
\end{theorem}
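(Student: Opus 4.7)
The strategy is to express $T(B)$ via $S(x,y)$ by a telescoping identity, and then apply Theorem~\ref{thm2}. Since $A(n,y) := \sum_{d\mid n^3,\, d\le y} r_4^*(d)$ satisfies $A(n,y) = S(n,y) - S(n-1,y)$ for every fixed $y$, rearranging and re-indexing give
\begin{equation*}
T(B) \;=\; S(B,B^2) \;-\; \sum_{n=1}^{B-1} V_n, \qquad V_n := S(n,(n{+}1)^3/B) - S(n,n^3/B).
\end{equation*}
With $(x,y) = (B,B^2)$ one has $\psi = \log B - \tfrac{1}{3}\log B^2 = \tfrac{1}{3}\log B$, which lies in the range of \eqref{Cor:Sxy}, so Theorem~\ref{thm2} immediately gives $S(B,B^2) = \tfrac{4}{9}\mathscr{C}_4 B^3(\log B)^2\{1 + O(1/\log B)\}$. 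It thus suffices to prove $\sum_{n<B} V_n = \tfrac{1}{3}\mathscr{C}_4 B^3(\log B)^2\{1 + O(1/\log B)\}$; subtracting then yields the asserted $\tfrac{1}{9}\mathscr{C}_4 B^3(\log B)^2\{1+O(1/\log B)\}$.

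The key structural observation for evaluating $V_n$ is that along the curve $y = n^3/B$ the parameter $\psi(n,y) = \log n - \tfrac{1}{3}\log y$ collapses to the \emph{constant} $\psi_0 := \tfrac{1}{3}\log B$, while along $y = (n{+}1)^3/B$ it equals $\psi_0 - \log(1+1/n)$. Substituting into the fine form \eqref{Evaluation:Sxy} with $F(t) := 4P(t) + \tfrac{4}{3}P'(t) - \tfrac{1}{3}P''(t)$ and Taylor-expanding in $1/n$, the main part of $V_n$ is $(n^3/B)[3F(\psi_0) - F'(\psi_0)] + O((n^2/B)(\log B)^2)$. Using $F(\psi_0) \sim \tfrac{4}{9}\mathscr{C}_4(\log B)^2$, $F'(\psi_0) = O(\log B)$, and $\sum_{n\le B} n^3 \sim B^4/4$, the main-term contribution produces the claimed $\tfrac{1}{3}\mathscr{C}_4 B^3(\log B)^2\{1+O(1/\log B)\}$.

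To render the argument rigorous, split at $n = N_0 := B/\log B$. For $n \le N_0$, interchange the summations to bound the truncated sum by $N_0 \sum_{d \le N_0^3/B} r_4^*(d)/f(d)$, where $f(d)$ is the smallest positive integer with $d \mid f(d)^3$. The multiplicativity of $r_4^*$ makes the Dirichlet series $\sum_d r_4^*(d)/(f(d)d^s)$ behave like $\zeta(s)\zeta(2s-1)\zeta(3s-2)$ near $s=1$, giving a triple pole there; hence $\sum_{d \le X} r_4^*(d)/f(d) \ll X(\log X)^2$, and the tail contributes $O(B^3(\log B)^{-2})$.

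The principal obstacle lies in controlling, for $n > N_0$, the error term in \eqref{Evaluation:Sxy}: it is $O(n^{15/4}/B^{3/4} + n^{4+\varepsilon}/B^{7/6})$ per $V_n$, and summed naively over $n \le B$ gives $O(B^4)$, swamping the main term. The ``delicate analysis'' referenced in the theorem must therefore exploit cancellation across neighbouring $n$'s. A natural route is a dyadic decomposition of the $n$-range in which each block sum $\sum_{n \in (N,2N]} V_n$ is evaluated as a single quantity --- rewritten as a double sum over $(m,d)$ pairs to which Perron's formula applies --- with the Dirichlet series $\sum_d r_4^*(d) d^{1/3}/(f(d)d^s)$ and its triple pole at $s = 4/3$ producing the required main term, essentially re-running the complex-analytic argument underlying Theorem~\ref{thm2}.
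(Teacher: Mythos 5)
Your Abel-summation reformulation $T(B) = S(B,B^2) - \sum_{n<B}V_n$ with $V_n := S(n,(n{+}1)^3/B) - S(n,n^3/B)$ is correct, and the heuristic evaluation of $\sum_{n<B}V_n$ — exploiting that $\psi = \log n - \tfrac13\log(n^3/B) \equiv \tfrac13\log B$ along the boundary curve and Taylor-expanding $F$ — does reproduce the constant $\tfrac19\mathscr{C}_4$. But the proof is incomplete exactly where you flag the ``principal obstacle'': the per-term error from \eqref{Evaluation:Sxy}, of size $n^{15/4}/B^{3/4}+n^{4+\varepsilon}/B^{7/6}$, sums to $\gg B^4$ over $n\le B$, and the proposed rescue — a dyadic re-run of the Perron/contour argument — is left as a sketch and not carried out. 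The paper itself warns in \S1.5 that the complex-integration method behind Theorem~\ref{thm2} cannot be applied directly to $T(B)$ precisely because its error term is too big, so the burden of showing your dyadic variant succeeds is substantial and unmet. (Note also that the bare $(m,d)$-interchange carries an $O(1)$-per-$d$ rounding error which itself sums to $\gg B^4$ over $d\le B^2$, so genuine cancellation would be needed there too.)

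The paper's actual proof avoids this entirely. It partitions $\{n\le B\}$ into near-geometric blocks $(\delta^kB,\delta^{k-1}B]$ with $\delta:=1-(\log B)^{-1}$ and $k\le k_0$ where $\delta^{k_0}<(\log B)^{-7}\le\delta^{k_0-1}$, and on each block replaces the $n$-dependent cutoff $d<n^3/B$ by the constant cutoffs $d<\delta^{3(k-1)}B^2$ (upper bound) or $d<\delta^{3k}B^2$ (lower bound), yielding
\[
T(B)\le\sum_{1\le k\le k_0}\Big(S(\delta^{k-1}B,\delta^{3(k-1)}B^2)-S(\delta^kB,\delta^{3(k-1)}B^2)\Big)+O(B^3)
\]
and a matching lower bound. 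The crucial point is that every $(x,y)$ appearing satisfies $y\asymp x^2$ and $\log x-\tfrac13\log y=\tfrac13\log B+O(1/\log B)$, so only the crude corollary \eqref{Cor:Sxy} with \emph{relative} error $O(1/\log B)$ is ever invoked; the absolute error term of \eqref{Evaluation:Sxy}, which sinks your telescoped sum, never enters. Each $S$-value equals $\delta^{4(k-1)}$ (resp.\ $\delta^{4k-3}$, $\delta^{4k-1}$, $\delta^{4k}$) times $\tfrac49\mathscr{C}_4B^3(\log B)^2\{1+O(1/\log B)\}$, the geometric series has ratio $\delta^4$, and $(1-\delta)(1-\delta^{4k_0})/(1-\delta^4)=\tfrac14+O(1/\log B)$ closes the computation from both sides. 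Replacing your telescoping step by this geometric-block decomposition is what makes the argument rigorous.
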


Finally  in \S\ref{PfThm3} we shall deduce \eqref{eq:N4*B}  in 
Theorem~\ref{thm1} from Theorems~\ref{thm2} and \ref{thm3}, and derive 
\eqref{eq:N4B} from \eqref{eq:N4*B}. 

\medskip

Our proofs of Theorem~\ref{thm2} and Theorem~\ref{thm3}, though perhaps
similar in spirit to the arguments of de la Breteche \cite{Breteche1998}, involve
a number of  refinements and new ideas on the analytic side. 
First, the evaluation of $S(x, y)$ is much more involved and complicated  than those in \cite{Breteche1998} since we lose symmetry here.  In \cite{Breteche1998}, all the variables are symmetric 
so that application of the complex integration method is rather standard.
Besides the absence of  symmetry, 
 the inner summation range in our situation is so sparse that we cannot get any power-saving error term. 
Second,  the evaluation of $T(B)$ is quite delicate since the range of  its second summation depends on the variable $n$ of the first summation. The complex-integration method used to prove Theorem~\ref{thm2} cannot be applied to treat $T(B)$.
The main difficulty is that the error term obtained by this method is too big to get an asymptotic formula for $T(B)$.
Finally, the method we use to evaluate $T(B)$  works for other situations as well. For example, it also works
in high degree forms;  see \cite{LWZ}. We hope that this method may also be useful in evaluating similar sums from other analytic number theory questions.

\section{Resolution of singularity}\label{Resolution}
In this section we construct a resolution of $S_n$ with $n\geq 3$, and calculate its Picard rank.  Then we find out the number of  crepant divisors of this resolution.

\subsection{A  resolution of $S_n$ with $n\geq 3$} 
Let 
$$
F=(y_1^2 +y_2^2+\cdots +y_n^2)z-x^3. 
$$ 
Then 
\begin{equation*}
\begin{aligned}
\nabla F
& := \bigg(\frac{\partial F}{\partial x}, \frac{\partial F}{\partial y_1}, \dots, \frac{\partial F}{\partial y_n}, 
\frac{\partial F}{\partial z}\bigg)
\\
& \; = (-3x^2, 2y_1 z, \dots , 2y_n z, y_1^2 +\cdots +y_n^2).
\end{aligned}
\end{equation*}
Thus $S_n$ has an isolated singular point at $P := [0:0: \dots :0:1]$ and the non-isolated singular locus
$T := \{[x: y_1: \dots: y_n: z] \in S_n \,:\, x=z=y_1^2+\cdots+ y_n^2=0\}$.

We will first resolve the singularity at the isolated point $P$. 
For this purpose, we only need to resolve singularity at the affine chart $z \neq 0$. Set $z=1$. 
For the affine equation 
\begin{equation}\label{sing:point}
x^3=y_1^2 + \cdots + y_n^2, 
\end{equation}
we consider its zero locus. 
In ${\A}^{n+1}\times {\P}^n$, ${\P}^n( [u:t_1: \dots : t_n ])$, 
let 
$$
\frac{x}{u}=\frac{y_1}{t_1}=\cdots =\frac{y_n}{t_n}\cdot
$$ 
In the affine piece $u\neq 0$, set $u=1$. 
Then $y_1=t_1 x, \dots, y_n =t_n x$. 
Plugging into \eqref{sing:point}, we get an equation $$x=t_1^2+\cdots +t_n^2,$$ which is smooth. 

In the affine charts $t_i\neq 0$ for $1\le i \le n$, if we set $t_i=1$, then we have 
$$
(uy_i)^3=y_i^2(t_1^2+\cdots+t_{i-1}^2+1+t_{i+1}^2+\cdots+ t_n^2),
$$  
which gives 
$$
u^3y_i=t_1^2+\cdots+t_{i-1}^2+1+t_{i+1}^2+\cdots+ t_n^2.
$$ 
Again it defines a smooth variety. Thus after one blow-up we resolve the singularity at the 
point $P$.  
Note that the exceptional divisor $E_1$ over the singular point $P $ is the 
zero locus of the equation 
$$
t_1^2+ \cdots + t_n^2 =0 
$$
in $\P^n$, which is irreducible. 

Secondly, let us resolve the non-isolated singularities of $S_n$. 
Noticing the symmetry in the coordinates $y_1, \dots, y_n$, 
we only need to consider the affine piece $y_n\neq 0$. 
Set 
$y_n=1$. Then the equation 
\begin{equation}\label{sing:continuous}
x^3=z(y_1^2+\cdots +y_{n-1}^2+1)
\end{equation}
defines an $n$-dimensional affine variety in $\A^{n+1}$. In $\A^{n+1}\times \P^2$, $\P^2([u:v:w])$, we let 
\begin{equation}\label{ratio}
\frac{x}{u}=\frac{z}{v}=\frac{y_1^2+\cdots + y_{n-1}^2+1}{w}\cdot
\end{equation}
In the affine chart $u\neq 0$, we set $u=1$. Then $z=vx$ and
$y_1^2+\cdots +y_{n-1}^2 +1=wx$. Plugging into equation \eqref{sing:continuous}, we get
\begin{equation}\label{piece:u}
x=vw.
\end{equation}
This defines a smooth variety. 

In the affine chart $v\neq 0$, we set $v=1$. Then $x=uz$ and
$y_1^2+\cdots +y_{n-1}^2 +1=wz$. Plugging into equation \eqref{sing:continuous}, we get
\begin{equation}\label{piece:v}
u^3z=w
\end{equation}
which also gives a smooth variety. 

In the affine piece $w\neq 0$, let $w=1$. Then we get $x=u(y_1^2+\cdots +y_{n-1}^2+1)$ and
$z=v(y_1^2+\cdots +y_{n-1}^2+1)$. Inserting into \eqref{sing:continuous}, we get
\begin{equation}\label{piece:w}
u^3(y_1^2+\cdots +y_{n-1}^2+1)=v, 
\end{equation}
which again is  smooth.  
Hence the above blow-up completely resolve the non-isolated singularities.

For any given point in the non-isolated singular locus subvariety $T$, 
the inverse image of this point in the affine chart $u \neq 0$ is defined by the equation \eqref{piece:u}. We deduce that $vw=0$. Similarly, in the affine piece $v \neq 0$ and the affine piece $w \neq 0$ we get $w=0$ and $v=0$, respectively. This implies the inverse image of a point in the non-isolated singular locus is a pair of $\P^1$ and the exceptional divisor has two irreducible components $F_v$ and $F_w$, which are defined as 
$$
F_v:= \{[x: y_1: \dots: y_n : z; u: v: w] \,:\, x=z=y_1^2+\cdots+ y_n^2=v=0\}
$$
and 
$$
F_w:= \{[x: y_1: \dots: y_n : z; u: v: w] \,:\, x=z=y_1^2+\cdots+ y_n^2=w=0\},
$$
respectively.

\subsection{Calculation of the Picard rank of  $S_n$ and the number of crepant divisors.} 
In this subsection, we calculate the Picard rank of $S_n$ and the number of crepant divisors.  

We will show that $\mathrm{rank}_{\C}(\text{Pic}(S_n))\le 1$,  which in combination with the obvious lower bound   
$\mathrm{rank}_{\Q}(\text{Pic}(S_n))\geq 1$   will establish   
$r_n=\mathrm{rank}_{\Q}(\text{Pic}(S_n))= 1.$ 

We start by claculating the rank of the class group of $S_n$.   Let $H$ be the hyperplane section of $S_n$ with $z=0$, and $U$  
the complement.  Then $U$ is an affine variety defined by the equation 
$$x^3=y_1^2+\cdots + y_n^2. $$ 
By \cite[Proposition 3.1]{SS},  
$$
\mathrm{Cl}(U)\simeq 0. 
$$ 
Noticing that the hperplane section $H=\{z=x=0\} $ is irreducible and applying 
\cite[Proposition II.6.5]{Ha}, we have the exact sequence 
$$ 
\Z \to \mathrm{Cl}(S_n) \to \mathrm{Cl}(U) \to 0, 
$$ 
which gives   
$$\mathrm{rank}_{\C}(\mathrm{Cl}(S_n))\leq 1. $$
Hence, we have $$\mathrm{rank}_{\C}(\text{Pic}(S_n)) \le \mathrm{rank}_{\C}(\mathrm{Cl}(S_n))\le 1. $$

In the following, we calculate the number of linearly independent crepant divisors for the resolution in the previous sub-section. 
 
 Let $\phi: \widetilde{S}_n \to S_n$ be the resolution map, where $\widetilde{S}_n$  is the  desingularisation of $S_n$.  Define
\begin{equation*}
\begin{aligned}
E_1 
& := \phi^{-1}(P),
\\
E_2
& := \phi^{-1}(T)=F_v + F_w,
\\ 
L 
& := \phi^*(\OO (1)|_{S_n}).
\end{aligned}
\end{equation*}

Let $\pi: \widetilde{\P}^{n+1}\to \P^{n+1}$ be the projection map, 
where $\widetilde{\P}^{n+1}=\Bl_{\{P, T\}}(\P^{n+1})$ is the  blow-up of $\P^{n+1}$. 
Then $\phi: \widetilde{S}_n \to S_n$ is the restriction of $\pi$ to $\widetilde{S}_n$.   We have the following natural commutative diagram: 
$$
\begin{tikzcd}
\widetilde{S}_n \arrow [hookrightarrow]{r}  \arrow[swap]{d}{\phi} & \widetilde{\P}^{n+1} \arrow{d}{\pi} \\
S_n \arrow [hookrightarrow] {r} & \P^{n+1}
\end{tikzcd}
$$ 
Let  $\widetilde{E}_1 := \pi^{-1}(P)$ and $\widetilde{E}_2 := \pi^{-1}(T)$.  
Then we have 
$$ 
K_{\widetilde{\P}^{n+1}}=\pi^* K_{\P^{n+1}} + n\widetilde{E}_1 + 2 \widetilde{E}_2 
$$ 
and 
$$
\pi^*S_n=\widetilde{S}_n +2\widetilde{E}_1 + 2\widetilde{E}_2, 
$$ 
where the last equality follows from the fact that $S_n$ has multiplicity two both at $P$ and the subvariety $T$.  By the adjunction formula, we get
 $$K_{\widetilde{S}^{n}}=(K_{\widetilde{\P}^{n+1}}+\widetilde{S}_n)|_{\widetilde{S}_n}
 = \big(\pi^* (K_{\P^{n+1}+S_n}) + (n-2)\widetilde{E}_1\big)|_{\widetilde{S}_n}=\pi^* K_{S_n} + (n-2)E_1, $$
 since $\widetilde{E}_1 \cap \widetilde{S}_n=E_1$.
Recall our assumption $n\geq 3$.  Then  we notice that $\widetilde{E}_2$ does not appear in the canonical divisor of $\widetilde{S}_n$.  Also,  $\widetilde{E}_2\cap \widetilde{S}_n=F_u \cup F_v$. 
Therefore, we conclude that the linearly independent exceptional divisors $F_u$ and 
$F_v$ are crepant, and conclude the following. 

\begin{proposition}\label{F:CreRank}
For all $n\geq 3$, we have
$$
r_n =\mathrm{rank}_{\Q} \big(\mathrm{Pic}(S_n )\big)=1  \hskip 5mm \mathrm{and}  \hskip 5mm \gamma_n=2. 
$$
\end{proposition}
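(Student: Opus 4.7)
The plan is to verify the two equalities $r_n=1$ and $\gamma_n=2$ separately, using the explicit resolution $\phi:\widetilde{S}_n\to S_n$ constructed in the previous subsection. For the Picard rank, I would first observe that the restriction of $\mathcal{O}_{\P^{n+1}}(1)$ to $S_n$ gives a non-torsion class, so $r_n\geq 1$. For the upper bound, I would set $H=S_n\cap\{z=0\}$ and consider the affine complement $U=S_n\setminus H$, cut out by $x^3=y_1^2+\cdots+y_n^2$. An appeal to \cite[Proposition 3.1]{SS} yields $\mathrm{Cl}(U)=0$, and since $H$ is irreducible, the standard excision sequence
$$\Z \to \mathrm{Cl}(S_n) \to \mathrm{Cl}(U) \to 0$$
gives $\mathrm{rank}_{\C}(\mathrm{Cl}(S_n))\leq 1$. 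Because $S_n$ is normal (a point clear from the singular-locus calculation), $\mathrm{Pic}(S_n)\subseteq\mathrm{Cl}(S_n)$, and hence $r_n\leq 1$, completing the Picard rank computation.

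For the crepant divisor count, I would factor $\phi$ through the ambient blow-up $\pi:\widetilde{\P}^{n+1}\to\P^{n+1}$ along $P$ and $T$, with exceptional loci $\widetilde{E}_1=\pi^{-1}(P)$ and $\widetilde{E}_2=\pi^{-1}(T)$. Standard blow-up formulas on smooth ambient varieties give
$$K_{\widetilde{\P}^{n+1}} = \pi^* K_{\P^{n+1}} + n\widetilde{E}_1 + 2\widetilde{E}_2,$$
where the coefficients $n$ and $2$ are one less than the codimensions of $P$ and $T$. Using that $S_n$ has multiplicity exactly $2$ both at $P$ and along $T$, I would derive
$$\pi^* S_n = \widetilde{S}_n + 2\widetilde{E}_1 + 2\widetilde{E}_2,$$
and then apply adjunction to obtain
$$K_{\widetilde{S}_n} = (K_{\widetilde{\P}^{n+1}}+\widetilde{S}_n)\big|_{\widetilde{S}_n} = \pi^* K_{S_n} + (n-2)E_1,$$
where $E_1=\widetilde{E}_1\cap\widetilde{S}_n$. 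Since $n\geq 3$, the divisor $E_1$ has positive discrepancy and is therefore \emph{not} crepant, while the two irreducible components $F_v, F_w$ of $\widetilde{E}_2\cap\widetilde{S}_n$ appear with coefficient zero and hence \emph{are} crepant. Their linear independence in $\mathrm{Pic}(\widetilde{S}_n)$ is visible from the disjoint chart-wise descriptions \eqref{piece:u}--\eqref{piece:w}, and this yields $\gamma_n=2$.

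The most delicate point is establishing the multiplicity formula $\pi^* S_n = \widetilde{S}_n + 2\widetilde{E}_1 + 2\widetilde{E}_2$, especially the multiplicity $2$ along $\widetilde{E}_2$. This is not immediate from the cubic equation $x^3=(y_1^2+\cdots+y_n^2)z$ because the singular scheme along $T$ is cut out by the non-principal ideal $(x,z,y_1^2+\cdots+y_n^2)$, and one must blow up this ideal rather than a reduced version of $T$. Fortunately, the explicit smooth local equations \eqref{piece:u}--\eqref{piece:w} derived in the previous subsection provide, in each of the three charts $u,v,w\neq 0$, a direct verification that the strict transform $\widetilde{S}_n$ is smooth and that the pull-back of $S_n$ picks up exactly $\widetilde{E}_2$ with multiplicity $2$. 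Once these charts are in hand, the adjunction argument above is completely formal, and the proposition follows.
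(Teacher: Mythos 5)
Your proposal follows essentially the same approach as the paper's: the Picard rank bound comes from the class-group excision sequence together with \cite[Proposition 3.1]{SS}, and the crepant-divisor count is derived from adjunction on the ambient blow-up $\pi:\widetilde{\P}^{n+1}\to\P^{n+1}$, using the multiplicity-two observations at $P$ and along $T$. Your added remarks---normality justifying $\mathrm{Pic}(S_n)\subseteq\mathrm{Cl}(S_n)$, and the explicit chart-by-chart verification of the multiplicity of $\widetilde{E}_2$---usefully make explicit steps the paper leaves implicit, though note that $(x,z,y_1^2+\cdots+y_n^2)$ is already the reduced ideal of $T$ (the quadric is nondegenerate), so the scheme-theoretic subtlety you flag does not actually arise.
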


\section{Dirichlet series associated with $S(x, y)$}\label{Dirichlet}

In view of the definition of $S(x, y)$ in \eqref{def:Sxy}, we define 
the double Dirichlet series 
\begin{equation}\label{def:Fsw}
\mathcal{F}(s, w) := \sum_{n\ge 1} n^{-s} \sum_{d\mid n^3} d^{-w} r_4^*(d)
\end{equation}
for $\re s>4$ and $\re w>0$, where $s$ and $w$ are complex parameters. 
The next lemma states that the function 
$\mathcal{F}(s, w) $ enjoys a nice factorization formula. 
In the following and throughout the paper, we denote by $\zeta(s)$ the Riemann zeta-function
and by $\tau(n)$ the divisor function.  

\begin{lemma}\label{Lem:Fsw}
For $\min_{0\le j\le 3} \re (s+jw-j)>1$, 
we have
\begin{equation}\label{Expression:Fsw}
\mathcal{F}(s, w) = \prod_{0\le j\le 3} \zeta(s+jw-j) \mathcal{G}(s, w),
\end{equation}
where $\mathcal{G}(s, w)$ is an Euler product, given by \eqref{def:Gpsw}, \eqref{def:G2sw} and \eqref{def:Gsw} below.
Further, for any $\varepsilon>0$, 
$\mathcal{G}(s, w)$ converges absolutely
for $\min_{0\le j\le 3} \re (s+jw-j)\ge \tfrac{1}{2}+\varepsilon$, and
in this half-plane
\begin{equation}\label{UB:Gsw}
\mathcal{G}(s, w)\ll_{\varepsilon} 1.
\end{equation}
\end{lemma}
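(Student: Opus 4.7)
The plan is to pass to the Euler product by multiplicativity, compute each local factor of $\mathcal F$ explicitly, and then verify that after division by the four zeta factors the remaining Euler product $\mathcal G$ converges absolutely in the larger half-space.

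Since the condition ``$4 \nmid \ell$'' is local at $p = 2$, the function $r_4^*$ is multiplicative with $r_4^*(p^e) = (p^{e+1}-1)/(p-1)$ for odd $p$, and $r_4^*(2^0) = 1$, $r_4^*(2^e) = 3$ for $e \ge 1$. Consequently $n \mapsto \sum_{d \mid n^3} d^{-w} r_4^*(d)$ is multiplicative and $\mathcal F(s,w) = \prod_p \mathcal F_p(s,w)$. Setting $u := p^{-s}$ and $v := p^{-w}$, and swapping the order of summation at each prime $p$ (using that $d \mid n^3 \Leftrightarrow v_p(n) \ge \lceil v_p(d)/3 \rceil$), the local Euler factor becomes
\begin{equation*}
\mathcal F_p(s,w) = \frac{1}{1-u}\sum_{e \ge 0} r_4^*(p^e)\, v^e u^{\lceil e/3\rceil}.
\end{equation*}
For odd $p$, I would insert $r_4^*(p^e) = (p \cdot p^e - 1)/(p-1)$ and apply the elementary identity $\sum_{e \ge 0} x^e u^{\lceil e/3\rceil} = (1 + ux + ux^2)/(1 - ux^3)$ with $x = pv$ and with $x = v$; combining the two resulting fractions over a common denominator and verifying that $p-1$ divides the numerator exactly, I would arrive at
\begin{equation*}
\mathcal F_p(s,w) = \frac{N_p(u,v)}{(1-u)(1-uv^3)(1-up^3v^3)},
\end{equation*}
with the explicit polynomial $N_p(u,v) := 1 + (p+1)uv + (p^2+p+1)uv^2 + p(p+1)uv^3 + p^2 u^2 v^4$. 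The analogous but shorter summation at $p = 2$ yields $(1-u)\mathcal F_2(s,w) = (1 + 3uv + 3uv^2 + 2uv^3)/(1 - uv^3)$, which fixes the definition \eqref{def:G2sw}.

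Defining $\mathcal G_p(s,w) := \mathcal F_p(s,w) \prod_{j=0}^{3}(1 - u p^j v^j)$, the factors $(1-u)$ and $(1-up^3v^3)$ cancel against the denominator of $\mathcal F_p$, so that for odd $p$
\begin{equation*}
\mathcal G_p(s,w) = \frac{N_p(u,v)(1-upv)(1-up^2v^2)}{1-uv^3}.
\end{equation*}
Taking the global product and invoking $\prod_p (1-up^j v^j)^{-1} = \zeta(s+jw-j)$ yields \eqref{Expression:Fsw}. To establish \eqref{UB:Gsw}, I would observe that on the half-space $\min_j \re(s+jw-j) \ge \tfrac{1}{2} + \varepsilon$ one has $|uv^3| \le p^{-7/2-\varepsilon}$, so $(1-uv^3)^{-1}$ is uniformly bounded. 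Writing $\mathcal G_p - 1 = (A_p - 1 + uv^3)/(1-uv^3)$ with $A_p := N_p(1-upv)(1-up^2v^2)$ a polynomial of bounded bidegree, each of its finitely many non-constant monomials $c_{a,b}(p)\, u^a v^b$ has a coefficient whose leading power of $p$ is readable off the explicit formula---for instance the coefficients $1,\; p+1,\; p^2+p+1,\; -p(p+1)$ for $(a,b) = (1,1), (1,2), (1,3), (2,2)$---and each contributes $|c_{a,b}(p)\, u^a v^b| \ll p^{-1-\delta}$ for some $\delta = \delta(\varepsilon) > 0$ thanks to one of the four conditions $\re(s+jw-j) > 1/2$. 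This gives $|\mathcal G_p - 1| \ll p^{-1-\delta}$ uniformly, whence $\prod_p \mathcal G_p$ converges absolutely and $\mathcal G(s,w) \ll_\varepsilon 1$.

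The main obstacle is the algebraic simplification producing $N_p$: one must verify that the numerator obtained by combining the two rational fractions is genuinely divisible by $p-1$, so that the cancellation leaves the integer polynomial of the correct size. Once this identity is in hand, the case $p = 2$ is a short hand-computation, and the uniform bound on $\mathcal G_p$ reduces to matching each of the finitely many explicit coefficients against one of the convergence conditions.
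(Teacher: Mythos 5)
Your proof is correct and follows essentially the same approach as the paper: factor $\mathcal{F}$ into an Euler product by multiplicativity, compute each local factor in closed form (the paper sums the inner $\mu$-sum first over a fixed $\nu$; you swap the order and sum over the valuation of $d$ first via $\sum_{e\ge 0}x^e u^{\lceil e/3\rceil}=(1+ux+ux^2)/(1-ux^3)$, a cleaner bookkeeping that lands on the same $\mathcal{G}_p$), and then bound the leftover Euler product by checking that each non-constant monomial in $\mathcal{G}_p-1$ is $O(p^{-1-\delta})$ using the conditions $\re(s+jw-j)\ge\tfrac12+\varepsilon$, exactly as in the paper's list of nine inequalities. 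One cosmetic slip: the coefficient of $uv^3$ in $A_p$ is $p(p+1)$, not $p^2+p+1$ (you have copied the $uv^2$ coefficient of $N_p$), but since both are $O(p^2)$ this does not affect the bound.
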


\begin{proof}
Obviously the 
functions $r_4^*(d)$ and  $n^{-s} \sum_{d\mid n^3} d^{-w} r_4^*(d)$ 
are multiplicative.
Since $r_4^*(d)\le d\tau(d)$, 
for $\re s>4$ and $\re w>0$ we can write the Euler product
$$
\mathcal{F}(s, w)
= \prod_p \sum_{\nu\ge 0} p^{-\nu s} \sum_{0\le \mu\le 3\nu} p^{- \mu w} r_4^*(p^{\mu})
=: \prod_p \mathcal{F}_p(s, w).
$$
The next is to simplify each $\mathcal{F}_p(s, w)$. To this end, we recall 
\eqref{def:r4*} so that 
\begin{equation}\label{def:r4*pmu}
\begin{aligned}
r_4^*(p^{\mu})
& \;= \begin{cases}
\dfrac{1-p^{\mu+1}}{1-p}  & \text{if } p>2, 
\\\noalign{\vskip 0mm}
3                                      & \text{if } p=2,  
\end{cases}
\end{aligned}
\end{equation}
for all integers $\mu\ge 1$. On the other hand, a simple formal calculation shows
\begin{equation}\label{Formal_Calcul_1}
\begin{aligned}
& \sum_{\nu\ge 0} x^{\nu} \sum_{0\le \mu\le 3\nu} y^{\mu} \frac{1-z^{\mu+1}}{1-z} 
\\
& = \frac{1}{1-z} \sum_{\nu\ge 0} x^{\nu} 
\bigg(\frac{1-y^{3\nu+1}}{1-y} - z\frac{1-(yz)^{3\nu+1}}{1-yz}\bigg)
\\
& = \frac{1}{1-z} \bigg\{
\frac{1}{1-y}\bigg(\frac{1}{1-x} - \frac{y}{1-xy^3}\bigg)
- \frac{z}{1-yz}\bigg(\frac{1}{1-x} - \frac{yz}{1-xy^3z^3}\bigg)
\bigg\}
\\\noalign{\vskip 1mm}
& = \frac{1+xy(1+z)+xy^2(1+z+z^2)+xy^3(z+z^2)+x^2y^4z^2}{(1-x)(1-xy^3)(1-xy^3z^3)}, 
\end{aligned}
\end{equation}
and
\begin{equation}\label{Formal_Calcul_2}
\begin{aligned}
1 + \sum_{\nu\ge 1} x^{\nu} 
\Big(1 + a \sum_{1\le \mu\le 3\nu} y^{\mu}\Big)
& = 1 + \sum_{\nu\ge 1} x^{\nu} 
\bigg(1 + a \frac{y-y^{3\nu+1}}{1-y}\bigg)
\\\noalign{\vskip -0,6mm}
& = \frac{1}{1-x}
+ \frac{a}{1-y} \bigg(\frac{xy}{1-x} - \frac{xy^{4}}{1-xy^3}\bigg)
\\\noalign{\vskip 0,5mm}
& = \frac{1+axy(1+y)+(a-1)xy^3}{(1-x)(1-xy^3)}\cdot
\end{aligned}
\end{equation}
When $p>2$, in view of \eqref{def:r4*pmu},
we can apply \eqref{Formal_Calcul_1} with $(x, y, z) = (p^{-s}, p^{-w}, p)$ to write
\begin{equation}\label{Fpsw:p>2}
\mathcal{F}_p(s, w)
= \prod_{0\le j\le 3} \big(1-p^{-(s+jw-j)}\big)^{-1} \mathcal{G}_p(s, w),
\end{equation}
where
\begin{equation}\label{def:Gpsw}
\begin{aligned}
\mathcal{G}_p(s, w)
& := \bigg(1
+\frac{p+1}{p^{s+w}}
+\frac{p^2+p+1}{p^{s+2w}}
+\frac{p^2+p}{p^{s+3w}}
+\frac{p^2}{p^{2s+4w}}\bigg)
\\
& \qquad\times
\bigg(1-\frac{p}{p^{s+w}}\bigg)
\bigg(1-\frac{p^2}{p^{s+2w}}\bigg)
\bigg(1-\frac{1}{p^{s+3w}}\bigg)^{-1}
\cdot
\end{aligned}
\end{equation}
While for $p=2$, the formula \eqref{Formal_Calcul_2} 
with $(x, y, z, a) = (2^{-s}, 2^{-w}, 2, 3)$ gives us
\begin{equation}\label{Fpsw:p=2}
\mathcal{F}_2(s, w)
= \prod_{0\le j\le 3} \big(1-2^{-(s+jw-j)}\big)^{-1} \mathcal{G}_2(s, w),
\end{equation}
where
\begin{equation}\label{def:G2sw}
\mathcal{G}_2(s, w)
:= \frac{1+3\cdot 2^{-s-w}+3\cdot 2^{-s-2w}+2^{-s-3w+1}}{1-2^{-s-3w}} \prod_{1\le j\le 3} (1-2^{-(s+jw-j)}).
\end{equation}
Combining \eqref{Fpsw:p>2}--\eqref{def:G2sw}, we get \eqref{Expression:Fsw} with
\begin{equation}\label{def:Gsw}
\mathcal{G}(s, w) := \prod_p \mathcal{G}_p(s, w)
\end{equation}
for $\re s>4$ and $\re w>0$.

Next we prove \eqref{UB:Gsw}.
It is easy to verify that
for $\displaystyle\min_{0\le j\le 3} (\sigma+ju-j)\ge \tfrac{1}{2}+\varepsilon$, we have
\begin{align*}
2(\sigma+u-1)
& \ge 2(\tfrac{1}{2}+\varepsilon) = 1+2\varepsilon,
\\
2(\sigma+2u-2)
& \ge 2(\tfrac{1}{2}+\varepsilon) = 1+2\varepsilon,
\\
\sigma+u
& \ge 1+\tfrac{1}{2}+\varepsilon = \tfrac{3}{2}+\varepsilon,
\\
\sigma+2u
& \ge 2+\tfrac{1}{2}+\varepsilon = \tfrac{5}{2}+\varepsilon,
\\
\sigma+2u-1
& \ge 1+\tfrac{1}{2}+\varepsilon = \tfrac{3}{2}+\varepsilon,
\\
\sigma+3u-1
& \ge 2+\tfrac{1}{2}+\varepsilon = \tfrac{5}{2}+\varepsilon,
\\
\sigma+3u-2
& \ge 1+\tfrac{1}{2}+\varepsilon = \tfrac{3}{2}+\varepsilon,
\\
2(\sigma+2u-1)
& \ge 2(1+\tfrac{1}{2}+\varepsilon) = 3+\varepsilon,
\\
\sigma+3u
& \ge 3+\tfrac{1}{2}+\varepsilon = \tfrac{7}{2}+\varepsilon.
\end{align*}
These together with \eqref{def:Gpsw} imply that
$$
|\mathcal{G}_p(s, w)|
= 1 + O(p^{-1-\varepsilon})
$$ 
for $\min_{0\le j\le 3} \re(s+jw-j)\ge \tfrac{1}{2}+\varepsilon$.
This shows that the Euler product $\mathcal{G}(s, w)$ converges absolutely
for $\min_{0\le j\le 3} \re (s+jw-j)\ge \tfrac{1}{2}+\varepsilon$, 
and \eqref{UB:Gsw} holds in this half-plane.
By analytic continuation, \eqref{Expression:Fsw} is also true in the same domain.
This completes the proof.
\end{proof}

\section{Outline of the proof of Theorem~\ref{thm2}} 
The basic idea is to apply the method of complex integration to, 
instead of our original $S(x, y)$,  the quantity 
\begin{equation}\label{def:MXY}
M(X, Y) := \int_1^Y \int_1^X S(x, y) \d x \d y 
\end{equation}
which is a mean-value of $S(x, y)$. This $M(X, Y)$ 
is much easier to handle; in particular when moving the contours  
of integration to the left, this does not involve any problem of convergence.  
We will first establish an asymptotic formula for $M(X, Y)$, and then 
derive the asymptotic formula \eqref{Evaluation:Sxy} for $S(x, y)$ 
in Theorem~\ref{thm2} by an analytic argument involving 
the operator $\mathscr{D}$ defined in the next paragraph. 
If each of these sums $S$ and $M$ has just one variable, 
the above method has been known for a long time;  
we refer the readers to \cite[Chapter II.5]{Tenenbaum1995} for an 
excellent exposition.  De la Bret\`{e}che \cite{Breteche1998} successfully 
handled a case where each of these sums $S$ and $M$ has three variables. 
In our present situation each of these sums $S$ and $M$ has two variables. 

Denote by $\mathscr{E}_k$ the set of all functions of $k$ variables
and define the operator $\mathscr{D}: \mathscr{E}_2\to \mathscr{E}_4$ 
by
\begin{equation}\label{def:Tf}
(\mathscr{D}f)(X, H; Y, J) := f(H, J) - f(H, Y) - f(X, J) + f(X, Y).
\end{equation} 
Our $S(x, y)$ and $M(X, Y)$ are closed related as shown in the following lemma, 
which in particular enables one to derive an asymptotic formula for $S(x, y)$ 
from that for $M(x, y)$. 

\begin{lemma}\label{Operator/1} 
Let $S(x, y)$ and $M(X, Y)$ be defined as in \eqref{def:Sxy} and \eqref{def:MXY}.
Then 
$$
(\mathscr{D}M)(X-H, X; Y-J, Y)\le HJS(X, Y)\le (\mathscr{D}M)(X, X+H; Y, Y+J)
$$
for $H\le X$ and $J\le Y$.  
\end{lemma}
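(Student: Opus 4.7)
The plan is to reduce both inequalities to straightforward monotonicity statements for $S$, after rewriting the action of $\mathscr{D}$ on $M$ as a double integral of $S$ over a rectangle.

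First I would unfold the definitions. Since $M(X,Y)=\int_1^Y\int_1^X S(x,y)\,dx\,dy$, the telescoping sum
\begin{equation*}
(\mathscr{D}M)(X, X+H; Y, Y+J) = M(X+H,Y+J) - M(X+H,Y) - M(X,Y+J) + M(X,Y)
\end{equation*}
collapses to
\begin{equation*}
(\mathscr{D}M)(X, X+H; Y, Y+J) = \int_{Y}^{Y+J}\int_{X}^{X+H} S(x,y)\,dx\,dy,
\end{equation*}
and similarly
\begin{equation*}
(\mathscr{D}M)(X-H, X; Y-J, Y) = \int_{Y-J}^{Y}\int_{X-H}^{X} S(x,y)\,dx\,dy.
\end{equation*}
These identities are the only thing one needs from the definition of $M$.

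Next I would observe the monotonicity of $S(x,y)$. By \eqref{def:Sxy}, $S(x,y)=\sum_{n\le x}\sum_{d\mid n^3,\,d\le y}r_4^*(d)$ is a sum of nonnegative terms $r_4^*(d)\ge 0$ indexed by pairs $(n,d)$ lying in the region $\{n\le x,\, d\le y\}$. Enlarging either $x$ or $y$ only enlarges the index set, so $S$ is nondecreasing in each variable separately. Consequently, on the rectangle $[X,X+H]\times[Y,Y+J]$ we have $S(x,y)\ge S(X,Y)$, while on $[X-H,X]\times[Y-J,Y]$ we have $S(x,y)\le S(X,Y)$.

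Combining these two pieces, the upper inequality follows from
\begin{equation*}
(\mathscr{D}M)(X, X+H; Y, Y+J) = \int_{Y}^{Y+J}\int_{X}^{X+H} S(x,y)\,dx\,dy \ge HJ\cdot S(X,Y),
\end{equation*}
and the lower one from
\begin{equation*}
(\mathscr{D}M)(X-H, X; Y-J, Y) = \int_{Y-J}^{Y}\int_{X-H}^{X} S(x,y)\,dx\,dy \le HJ\cdot S(X,Y).
\end{equation*}
No step here is truly an obstacle; the only thing worth being careful about is that the hypotheses $H\le X$ and $J\le Y$ ensure the lower integration limits $X-H,\,Y-J$ are $\ge 1$ (actually $\ge 0$; one must check the integrand is defined there, but $S(x,y)$ makes sense and is nonnegative on all of $[0,\infty)^2$, with $S=0$ when $x<1$, so the telescoping identity is unaffected). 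Thus the whole lemma is essentially a bookkeeping exercise in Fubini plus monotonicity.
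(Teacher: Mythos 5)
Your proof is correct and gives exactly the standard argument that the paper alludes to by referencing de la Bret\`eche's Lemma~2: unwind $\mathscr{D}M$ into the double integral $\int_X^{X+H}\int_Y^{Y+J}S$ (resp.\ $\int_{X-H}^X\int_{Y-J}^Y S$) and then use that $S$ is nonnegative and nondecreasing in each variable. Your side remark about the range $[X-H,X]\times[Y-J,Y]$ possibly reaching below $1$ is handled correctly by noting $S$ vanishes for $x<1$ or $y<1$, so the telescoping identity and the monotonicity bound both survive.
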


The operator $\mathscr{D}$ has some properties that we are going to use repeatedly 
throughout the paper. These are summarized in the following lemma.   

\begin{lemma}\label{Operator/2}
\par
{\rm (i)}
Let $f\in \mathscr{E}_2$ be a function of class $C^3$. Then we have
$$
(\mathscr{D}f)(X, H; Y, J)
= (J-Y)(H-X) \bigg\{\frac{\partial^2f}{\partial x\partial y}(X, Y) + O\big(R(X, H; Y, J)\big)\bigg\}
$$
for $X\le H$ and $Y\le J$, where
$$
R(X, H; Y, J) 
:= (H-X) 
\max_{\substack{X\le x\le H\\ Y\le y\le J}} \bigg|\frac{\partial^3f}{\partial x^2\partial y}(x, y)\bigg|
+ (J-Y)\max_{\substack{X\le x\le H\\ Y\le y\le J}} \bigg|\frac{\partial^3f}{\partial x\partial y^2}(x, y)\bigg|.
$$
\par
{\rm (ii)}
If $f(X, Y) = f_1(X)f_2(Y)$, then
$$
(\mathscr{D}f)(X, H; Y, J) = \big(f_1(H)-f_1(X)\big) \big(f_2(J)-f_2(Y)\big).
$$
\end{lemma}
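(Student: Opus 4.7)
Both parts follow directly from the definition $(\mathscr{D}f)(X,H;Y,J) = f(H,J) - f(H,Y) - f(X,J) + f(X,Y)$, so the proof amounts to packaging the fundamental theorem of calculus. Part (ii) is immediate: substituting $f(X,Y) = f_1(X)f_2(Y)$ into the definition and grouping the four summands produces $\bigl(f_1(H) - f_1(X)\bigr)\bigl(f_2(J) - f_2(Y)\bigr)$ in a single algebraic step, with no estimation required.

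For part (i), the plan is to rewrite $(\mathscr{D}f)(X,H;Y,J)$ as a mixed double integral and then expand the integrand to first order around $(X,Y)$. Applying the fundamental theorem in the variable $y$ to each of the pairs $f(H,J)-f(H,Y)$ and $f(X,J)-f(X,Y)$, and then in the variable $x$ to the resulting difference, yields
\[
(\mathscr{D}f)(X,H;Y,J) = \int_X^H \int_Y^J \frac{\partial^2 f}{\partial x \partial y}(x,y)\, dy\, dx.
\]
I would then replace the integrand by its value at $(X,Y)$ via the telescoping identity
\[
\frac{\partial^2 f}{\partial x \partial y}(x,y) - \frac{\partial^2 f}{\partial x \partial y}(X,Y) = \int_X^x \frac{\partial^3 f}{\partial u^2 \partial y}(u, y)\, du + \int_Y^y \frac{\partial^3 f}{\partial x \partial v^2}(X, v)\, dv,
\]
which is itself a single application of the fundamental theorem in each variable and uses only the assumption $f \in C^3$.

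The constant part integrates exactly to $(H-X)(J-Y)\,\frac{\partial^2 f}{\partial x\partial y}(X,Y)$, while the two error integrals, bounded trivially by the suprema of $|\partial^3 f/\partial x^2 \partial y|$ and $|\partial^3 f/\partial x \partial y^2|$ over the box $[X,H]\times[Y,J]$ and then integrated against $dy\,dx$ over that same box, contribute $(H-X)(J-Y)\cdot O(R(X,H;Y,J))$ with $R$ exactly as stated. There is no serious obstacle here; the only point to monitor is that the splitting of $\partial^2 f/\partial x \partial y$ be done symmetrically, so that each third-order partial appears paired with its natural length factor $(H-X)$ or $(J-Y)$, which then matches the definition of $R$ on the nose.
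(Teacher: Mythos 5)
Your proof is correct; the paper itself omits the argument and simply cites de la Bret\`eche's Lemma~2, and what you have written is precisely the standard filling-in: part~(ii) by direct algebra, and part~(i) by writing $(\mathscr{D}f)$ as the double integral of $\partial^2 f/\partial x\partial y$ over $[X,H]\times[Y,J]$ and then telescoping the integrand back to its value at $(X,Y)$ via the fundamental theorem of calculus in each variable. The bounds you obtain from the two one-dimensional integrals match the definition of $R(X,H;Y,J)$ exactly, so the approach is the same one the paper has in mind.
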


Lemmas~\ref{Operator/1} and \ref{Operator/2} can be proved similarly as in  
\cite[Lemma 2]{Breteche1998}; the details are therefore omitted. 

The next elementary estimate will also be used several times in the paper. 
It is essentially \cite[Lemma 6(i)]{Breteche1998}.  

\begin{lemma}\label{Lem3.2}
Let $1\le H\le X$ and $|\sigma|\le 10$.
Then for any $\beta\in [0, 1]$, we have
\begin{equation}\label{Lem3.2_Eq_A}
\big|(X+H)^{s} - X^{s}\big|\ll X^{\sigma} ((|\tau|+1)H/X)^{\beta},
\end{equation}
where the implied constant is absolute.
\end{lemma}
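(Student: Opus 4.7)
The plan is to prove the two endpoint cases $\beta=0$ and $\beta=1$ separately, and then interpolate by taking a weighted geometric mean of the two bounds.

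First I would handle the case $\beta=0$, which amounts to the trivial bound $|(X+H)^{s}-X^{s}|\ll X^{\sigma}$. Since $H\le X$, we have $X\le X+H\le 2X$, so $(X+H)^{\sigma}$ lies between $X^{\sigma}$ and $2^{|\sigma|}X^{\sigma}\le 2^{10}X^{\sigma}$. By the triangle inequality $|(X+H)^{s}-X^{s}|\le (X+H)^{\sigma}+X^{\sigma}\ll X^{\sigma}$ with an absolute implied constant, since $|\sigma|\le 10$.

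Next I would handle $\beta=1$ using the representation
\begin{equation*}
(X+H)^{s}-X^{s} = s\int_{X}^{X+H} t^{s-1}\d t.
\end{equation*}
Taking absolute values and using $|t^{s-1}|=t^{\sigma-1}$ with $X\le t\le X+H\le 2X$, we get
\begin{equation*}
\big|(X+H)^{s}-X^{s}\big|\le |s|\int_{X}^{X+H} t^{\sigma-1}\d t\ll |s|\cdot H\cdot X^{\sigma-1},
\end{equation*}
where the implied constant depends only on $|\sigma|\le 10$ (handling the sign of $\sigma-1$ through the $2^{|\sigma-1|}$ factor). Since $|s|=|\sigma+\mi\tau|\le |\sigma|+|\tau|\ll |\tau|+1$, this yields
\begin{equation*}
\big|(X+H)^{s}-X^{s}\big|\ll X^{\sigma}\cdot (|\tau|+1)H/X,
\end{equation*}
which is the case $\beta=1$ of \eqref{Lem3.2_Eq_A}.

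Finally, for general $\beta\in[0,1]$, I would interpolate: denoting the left side by $L$ and the two bounds by $A=CX^{\sigma}$ and $B=CX^{\sigma}(|\tau|+1)H/X$, I write $L=L^{1-\beta}\cdot L^{\beta}\le A^{1-\beta}B^{\beta}$, which gives exactly the claimed bound with an absolute implied constant. There is no serious obstacle here; the only point requiring a little care is to keep track of the fact that $|\sigma|\le 10$ is what makes all implied constants in the two endpoint bounds absolute, so that the interpolation also has an absolute implied constant independent of $\sigma$ and $\tau$.
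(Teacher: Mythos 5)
Your proof is correct and follows essentially the same route as the paper: establish the trivial bound (your $\beta=0$ case), the derivative bound via $(X+H)^s - X^s = s\int_X^{X+H} t^{s-1}\,\mathrm{d}t$ (your $\beta=1$ case), and then interpolate by taking the geometric mean $L = L^{1-\beta}L^{\beta}$. The only difference is that you spell out explicitly why $|\sigma|\le 10$ keeps all constants absolute, which the paper leaves implicit.
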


\begin{proof}
We have trivially $\big|(X+H)^{s} - X^{s}\big|\ll X^{\sigma}$.
On the other hand, we can write
$$
\big|(X+H)^{s} - X^{s}\big|
= \bigg|s \int_{X}^{X+H} x^{s-1} \d x\bigg|
\ll |s| X^{\sigma-1} H.
$$
From these we can deduce, for any $\beta\in [0, 1]$,
$$
\big|(X+H)^{s} - X^{s}\big|
\ll (X^{\sigma})^{1-\beta} (|s| X^{\sigma-1} H)^{\beta}.
$$
This implies the desired inequality. 
\end{proof}

\section{Proof of Theorem \ref{thm2}}\label{PfThm2}

We shall first evaluate $M(X, Y)$, from which we shall deduce 
Theorem \ref{thm2} by applying the operator $\mathscr{D}$ 
defined as in \eqref{def:Tf}. In the sequel, we suppose 
\begin{equation}\label{Condition:XYTUHJ}
10\le X\le Y\le X^3,
\quad
(XY)^3\le 4T\le U\le X^{12},
\quad
H\le X,
\quad
J\le Y,
\end{equation}
and for brevity we fix the following notation:
\begin{equation}\label{def:kappa_Lambda_L}
s := \sigma+\mathrm{i}\tau,
\quad
w := u+\mathrm{i}v,
\quad
\mathcal{L} := \log X,
\quad
\kappa := 1+\mathcal{L}^{-1},
\quad
\lambda := 1+4\mathcal{L}^{-1}.
\end{equation}
The following proposition is an immediate consequence of 
Lemmas \ref{Perron_Formula:MXY}-\ref{Lem:Evaluate_I3} below. 

\begin{proposition}\label{Pro:M1XY}
Under the previous notation, we have
$$
M(X, Y)
= X^2 Y^2 P \bigg(\log X-\frac{1}{3}\log Y\bigg) + R_0(X, Y) + \cdots + R_3(X, Y) + O(1)
$$
uniformly for $(X, Y, T, U, H, J)$ satisfying 
\eqref{Condition:XYTUHJ},
where $R_0, R_1, R_2, R_3$ and $P(t)$ are defined as in 
\eqref{def:R0XY}, 
\eqref{def:R1XY}, 
\eqref{def:R2XY}, 
\eqref{def:R3XY}
and \eqref{def:Pt} below,
respectively.
\end{proposition}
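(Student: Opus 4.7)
The plan is to invoke Lemma~\ref{Lem:Fsw} and apply a double truncated Mellin--Perron formula to express $M(X,Y)$ as the double contour integral
\[
\frac{1}{(2\pi \mi)^2} \int_{\kappa - \mi T}^{\kappa + \mi T} \int_{\lambda - \mi U}^{\lambda + \mi U} \mathcal{F}(s,w) \frac{X^{s+1}}{s(s+1)} \frac{Y^{w+1}}{w(w+1)} \, dw \, ds
\]
plus a truncation error of size $O(1)$ under \eqref{Condition:XYTUHJ}. The crucial reason for working with the double antiderivative $M$ rather than with $S$ directly is that the factors $\{s(s+1) w(w+1)\}^{-1}$ provide enough vertical decay to make the double integral absolutely convergent and amenable to contour shifting --- this Perron step is the content of the first auxiliary lemma, and its error analysis relies on $r_4^*(d) \ll d\tau(d)$ together with the bounds $(XY)^3 \le 4T \le U \le X^{12}$.

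The main body of the proof is the shifting of the double contour past the pole-lines of $\mathcal{F}(s,w)$. By Lemma~\ref{Lem:Fsw}, all singularities lie on the four concurrent lines $\ell_j : s + j w = j+1$ ($j = 0, 1, 2, 3$), which all pass through the point $(s,w) = (1,1)$. I would shift the $s$-contour from $\re s = \kappa$ leftwards to $\re s = \tfrac{1}{2} + \varepsilon$, collecting the residue along each $\ell_j$ (which appears as a simple pole in $s$ at $s = j+1 - jw$ for each fixed $w$ on the $\lambda$-line) together with a double remainder integral over the shifted rectangle. Each of the four residues is a single-variable integral in $w$ whose integrand inherits a triple pole at $w=1$ from the three surviving zeta factors; shifting the $w$-contour leftwards past this triple pole extracts a further residue contribution and leaves behind a one-dimensional remainder integral. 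The iterated residues combine to give the main term, while the double and four single remainder integrals are identified with $R_0, R_1, R_2, R_3$.

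To identify the main term explicitly, I would parameterise each $\ell_j$ by $w$, on which $X^{s+1} Y^{w+1}$ restricts to $X^{j+2} Y (Y/X^j)^{w}$, so that at the confluent point $(1,1)$ all four contributions share the factor $X^2 Y^2$. A two-variable Laurent expansion in $(s-1, w-1)$, with $\mathcal{G}(1,1)$ evaluated through the Euler product \eqref{def:Gsw}, then shows that the quadratic in $(\log X, \log Y)$ that formally emerges collapses into a quadratic in the single variable $\psi = \log X - \tfrac13 \log Y$, yielding the polynomial $P$ with leading coefficient $\mathscr{C}_4$ of \eqref{def:a2}. The main obstacle is precisely this confluent-pole calculation: four simple pole-lines through a common point do not yield a clean product-of-residues, and the four one-variable residue expansions must conspire --- through the specific arithmetic of $\mathcal{G}$ and the binomial-type combinatorics of the slopes $0, 1, 2, 3$ of the lines $\ell_j$ --- to cancel all dependence on $(\log X, \log Y)$ except in the direction $\psi$. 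A secondary obstacle is bounding the remainder integrals $R_0, \ldots, R_3$ uniformly in $(T, U, X, Y)$; this requires combining the bound $\mathcal{G}(s,w) \ll_\varepsilon 1$ of Lemma~\ref{Lem:Fsw} with classical convexity estimates for $\zeta$ on the shifted vertical lines, and placing those contours inside the strip $\re \in (\tfrac12, 1)$ where $\mathcal{G}$ is holomorphic and the powers $X^s$, $Y^w$ do not overwhelm the savings of Lemma~\ref{Lem3.2}.
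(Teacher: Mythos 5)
Your high‑level framework is right — double Perron, uniform control of $\mathcal{G}$, contour shifts, error absorbed via the extra $1/s(s+1)w(w+1)$ decay — and the first Perron step as you describe it is exactly Lemma~\ref{Perron_Formula:MXY}. But the proposed contour‑shifting strategy (shift $s$ leftward past all four pole‑lines $\ell_j$, then shift $w$ past the induced triple pole at $w=1$ in each) is not what the paper does, and it fails to produce the main term. In the local model at the confluent point, with $\sigma=s-1$, $\omega=w-1$, $a=\log Y$, $b=\log X$, the relevant kernel is $X^{\sigma}Y^{\omega}/(\sigma(\sigma+\omega)(\sigma+2\omega)(\sigma+3\omega))$, and the $\sigma$‑residues at $\sigma=-j\omega$ give $(Y/X^j)^{\omega}/\omega^3$ with signed coefficients $\tfrac16,\,-\tfrac12,\,\tfrac12,\,-\tfrac16$; extracting the triple pole at $\omega=0$ from each produces, up to a global constant, the combination $a^2-3(a-b)^2+3(a-2b)^2-(a-3b)^2$, which is the third finite difference of the quadratic $(a-jb)^2$ and hence is \emph{identically zero}. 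Your claim that the four pieces ``conspire to cancel all dependence on $(\log X,\log Y)$ except in the direction $\psi$'' is therefore wrong: they cancel \emph{everything} at the top degree, leaving no $X^2Y^2(\log)^2$ contribution from these iterated residues. The main term would then have to be hiding in your four one‑dimensional remainders and the double remainder, which cannot all be small — so the decomposition is unusable.

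The paper's proof of Proposition~\ref{Pro:M1XY} deliberately breaks this symmetry. It first shifts $w$ (Lemma~\ref{Lem:MXYTU}), collecting three residues at $w_j(s)=(j+1-s)/j$ ($j=1,2,3$) — note there is no $j=0$ residue in $w$ — leaving $I_1+I_2+I_3+R_0+O(1)$. Then it handles the $s$‑contours asymmetrically: $I_1$ is moved \emph{right} to $\re s=\tfrac54-\varepsilon$ and $I_2$ \emph{right} to $\re s=\tfrac32$, so they never cross the triple pole at $s=1$ and become the remainder integrals $R_1, R_2$ (Lemmas~\ref{Lem:Evaluate_I1}, \ref{Lem:Evaluate_I2}); only $I_3$ is moved \emph{left} to $\re s=\tfrac12+\varepsilon$, crossing $s=1$ and producing the single residue $X^2Y^2 P(\psi)$ with $R_3$ as remainder (Lemma~\ref{Lem:Evaluate_I3}). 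In other words, exactly one of the four residue chains through $(1,1)$ is collected; the other pole‑lines are evaded. The directional choice to move $I_1,I_2$ right rather than left is what makes $R_1,R_2$ provably small under the $\mathscr{D}$ operator — their $Y$‑exponents $Y^{3-s}$, $Y^{(5-s)/2}$ are hostile to a leftward shift, whereas $I_3$'s $Y^{(7-s)/3}$ is not — and it is precisely the step your proposal misses.
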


The proof is divided into several subsections. 

\subsection{Application of Perron's formula} 
The first step is to apply Perron's formula twice to transform 
$M(X, Y)$ into a form that is ready for future treatment.  

\begin{lemma}\label{Perron_Formula:MXY}
Under the previous notation, we have
\begin{equation}\label{MXY=MXYTU}
M(X, Y)
= M(X, Y; T, U) + O(1)
\end{equation}
uniformly for $(X, Y, T, U)$ satisfying 
\eqref{Condition:XYTUHJ}, where the implied constant is absolute and
\begin{equation}\label{def:M1XYTU}
M(X, Y; T, U)
:= \frac{1}{(2\pi {\rm i})^2} \int_{\kappa-{\rm i}T}^{\kappa+{\rm i}T} 
\bigg(\int_{\lambda-{\rm i}U}^{\lambda+{\rm i}U} \frac{\mathcal{F}(s, w) Y^{w+1}}{w(w+1)} \d w\bigg) 
\frac{X^{s+1}}{s(s+1)} \d s.
\end{equation}
\end{lemma}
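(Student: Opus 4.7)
The plan is to apply the second-order Mellin (Perron) formula twice, once in the $n$-variable and once in the $d$-variable, to express $M(X, Y)$ as an exact double contour integral on $\re s = \kappa$, $\re w = \lambda$, and then truncate the vertical lines to $|\tau| \le T$, $|v| \le U$, showing that the truncation error is $O(1)$.

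First, by interchanging the integrations in \eqref{def:MXY} with the finite sums defining $S(x, y)$ and using $\int_1^X \mathbf{1}_{n \le x} \d x = (X - n)_+$, I would rewrite
\begin{equation*}
M(X, Y) = \sum_{n \ge 1} \sum_{d \mid n^3} r_4^*(d) (X - n)_+ (Y - d)_+,
\end{equation*}
which is a finite sum. Inserting the Mellin pair
\begin{equation*}
(X - n)_+ = \frac{1}{2\pi\mi} \int_{\kappa-\mi\infty}^{\kappa+\mi\infty} \frac{X^{s+1}}{n^s s(s+1)} \d s
\quad (\kappa > 0,\, n \ge 1),
\end{equation*}
and its analogue for $(Y - d)_+$ on the line $\re w = \lambda$, and exchanging the sum with the two integrals, would produce the exact identity
\begin{equation*}
M(X, Y) = \frac{1}{(2\pi\mi)^2} \int_{\kappa-\mi\infty}^{\kappa+\mi\infty} \int_{\lambda-\mi\infty}^{\lambda+\mi\infty} \mathcal{F}(s, w) \frac{X^{s+1} Y^{w+1}}{s(s+1) w(w+1)} \d w \d s.
\end{equation*}

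This interchange is justified by absolute convergence: the integrand is dominated on the product of vertical lines by $\mathcal{F}(\kappa, \lambda) X^{\kappa+1} Y^{\lambda+1} / \{(1+|\tau|)^2 (1+|v|)^2\}$, and $\mathcal{F}(\kappa, \lambda)$ is finite since $\kappa > 1$ (using $r_4^*(d) \le d \tau(d)$ and the divisor estimate $\sum_{d \mid n^3} d^{1-\lambda} \tau(d) \ll n^{\varepsilon}$). The factorization \eqref{Expression:Fsw}, combined with $\zeta(1 + c\mathcal{L}^{-1}) \ll \mathcal{L}$ applied to each of the four zeta factors evaluated at arguments $1 + O(\mathcal{L}^{-1})$, then yields the sharper estimate $\mathcal{F}(\kappa, \lambda) \ll \mathcal{L}^4$, which is needed in the next step.

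Finally, I would truncate the integral at $|\tau| \le T$ and $|v| \le U$. The discrepancy $M(X, Y) - M(X, Y; T, U)$ splits into a tail over $|\tau| > T$ with $w$ on the full line $(\lambda)$, plus a tail over $|\tau| \le T$ with $|v| > U$. Using $|\mathcal{F}(s, w)| \le \mathcal{F}(\kappa, \lambda) \ll \mathcal{L}^4$ along the lines together with $|s(s+1)|^{-1} \ll (1+|\tau|)^{-2}$ and $|w(w+1)|^{-1} \ll (1+|v|)^{-2}$, the two tails are bounded respectively by $\mathcal{L}^4 X^{\kappa+1} Y^{\lambda+1}/T$ and $\mathcal{L}^4 X^{\kappa+1} Y^{\lambda+1}/U$. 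Since $X^{\kappa+1} \ll X^2$ and $Y^{\lambda+1} \ll Y^2$ (as $Y \le X^3$ forces $Y^{4/\mathcal{L}} = O(1)$), and since the hypothesis $(XY)^3 \le 4T \le U$ in \eqref{Condition:XYTUHJ} reduces both bounds to $O(\mathcal{L}^4/(XY)) = O(1)$ for $X \ge 10$, the total truncation error is $O(1)$. The only mildly delicate step is the $\mathcal{L}^4$ bound on $\mathcal{F}(\kappa, \lambda)$; the remaining bookkeeping is routine given the generous hypothesis on $T$ and $U$.
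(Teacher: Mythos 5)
Your proposal is correct and follows the same broad strategy as the paper: apply the second-order Perron (Mellin) formula in each of the two variables to obtain the exact double-integral representation \eqref{Formula_MXY}, then truncate both vertical lines. Your derivation via $M(X,Y)=\sum_{n,d}r_4^*(d)(X-n)_+(Y-d)_+$ and the kernel identity is equivalent to the paper's two sequential applications of Perron's formula \cite[Theorem II.2.3]{Tenenbaum1995}. The one genuine difference is in the bound for the integrand on the lines $\re s=\kappa$, $\re w=\lambda$. The paper invokes the convexity-type estimate \eqref{UB:zeta} for $\zeta$ together with \eqref{UB:Gsw}, obtaining $\mathcal{F}(\kappa+\mathrm{i}\tau,\lambda+\mathrm{i}v)\ll\max\{\mathcal{L}^4,\log^4(|\tau|+|v|+3)\}$, which grows with $|\tau|,|v|$ but still integrates against $|s(s+1)w(w+1)|^{-1}$ to give $O(\mathcal{L}^4/T)$ and $O(\mathcal{L}^4/U)$ (using $\log^4 U\ll\mathcal{L}^4$ from $U\le X^{12}$). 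You instead exploit the non-negativity of the Dirichlet coefficients of $\mathcal{F}$ to get the \emph{uniform} bound $|\mathcal{F}(s,w)|\le\mathcal{F}(\kappa,\lambda)\ll\mathcal{L}^4$, the last estimate coming from the factorization \eqref{Expression:Fsw}, the simple pole of $\zeta$ at $1$, and the boundedness of $\mathcal{G}$; this is legitimate because $\min_{0\le j\le 3}(\kappa+j\lambda-j)=1+\mathcal{L}^{-1}>1$, so the double Dirichlet series converges absolutely at $(\kappa,\lambda)$. Your positivity argument is slightly cleaner, sidestepping \eqref{UB:zeta} altogether and making the truncation bookkeeping entirely elementary; the paper's approach, while marginally heavier here, reuses machinery it needs anyway in later lemmas where contours are moved to the left of $\sigma=1$ and positivity no longer applies.
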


\begin{proof}
In view of the definition of $r_4^*(d)$, we have, for any $\varepsilon>0$ and all $d\geq 1$, 
$$
r_4^*(d)\le d\tau(d)\ll_{\varepsilon} d^{1+\varepsilon}, 
$$
which implies that 
$$
\tau_*(n^3, y) 
:= \sum_{d\mid n^3, \, d\le y} r_4^*(d)
\ll_{\varepsilon} y^{1+\varepsilon}\tau(n^3)
\ll_{\varepsilon} y^{1+\varepsilon} n^{\varepsilon}
$$
uniformly for $y\ge 1$ and $n\in \N$, where the implied constant depends on $\varepsilon$ only.
Thus the Dirichlet series $\sum_{n\ge 1} \tau_*(n^3, y) n^{-s}$ converges absolutely for $\sigma>1$.
Applying Perron's formula \cite[Theorem II.2.3]{Tenenbaum1995}, we write 
\begin{equation}\label{Perron_s}
\int_1^X S(x, y) \d x
= \frac{1}{2\pi {\rm i}} \int_{\kappa-{\rm i}\infty}^{\kappa+{\rm i}\infty}
\sum_{n\ge 1} \frac{\tau_*(n^3, y)}{n^s} \frac{X^{s+1}}{s(s+1)} \d s, 
\end{equation}
which holds for all $y\ge 1$. 

We are going to apply Perron's formula again but to the function $\tau_*(n^3; y)$ 
in the above formula. We write 
$$
\tau_*(n^3; y) = \sum_{d\le y} a_n(d) 
\qquad \mbox{with} \qquad 
a_n(d) := 
\begin{cases}
r_4^*(d)  & \text{if  } d\mid n^3, 
\\
0             & \text{otherwise}, 
\end{cases}
$$
and notice that the (finite) Dirichlet series $\sum_{d\ge 1} a_n(d) d^{-w}$ 
converges absolutely for all $w\in \C$.
Thus we have, as before,
\begin{equation}\label{Perron_w}
\int_1^Y \tau_*(n^3; y) \d y
= \frac{1}{2\pi {\rm i}} \int_{\lambda-{\rm i}\infty}^{\lambda+{\rm i}\infty}
\sum_{d\ge 1} \frac{a_n(d)}{d^w}  \frac{Y^{w+1}}{w(w+1)} \d w.
\end{equation}
Integrating \eqref{Perron_s} with respect to $y$ on $[1, Y]$ 
and then applying \eqref{Perron_w}, we find that
\begin{equation}\label{Formula_MXY}
M(X, Y)
= \frac{1}{(2\pi {\rm i})^2} 
\int_{\kappa-{\rm i}\infty}^{\kappa+{\rm i}\infty} 
\bigg(\int_{\lambda-{\rm i}\infty}^{\lambda+{\rm i}\infty} \frac{\mathcal{F}(s, w) Y^{w+1}}{w(w+1)} \d w\bigg)
\frac{X^{s+1}}{s(s+1)} \d s.
\end{equation}

Next we shall cut the above to infinite integrals into finite ones. 
We need the well-known estimate 
(cf. e.g. \cite[page 146, Theorem II.3.7]{Tenenbaum1995}) 
\begin{equation}\label{UB:zeta}
\zeta(s)\ll |\tau|^{\max\{(1-\sigma)/3, 0\}} \log |\tau|
\qquad
(\sigma\ge 1-c/\log|\tau|, \; |\tau|\ge 2) 
\end{equation}
where $c>0$ is a positive constant, as well as the fact that 
$s=1$ is the simple pole of $\zeta(s)$. From these and 
\eqref{UB:Gsw} of Lemma \ref{Lem:Fsw}, we deduce that, 
uniformly for $\tau\in \R$ and $v\in \R$, 
$$
\mathcal{F}(\kappa+\text{i}\tau, \lambda+\text{i}v)
\ll \max\{\mathcal{L}^4, \, \log^4(|\tau|+|v|+3)\}. 
$$
It follows that 
\begin{align*}
\int_{\kappa-{\rm i}\infty}^{\kappa+{\rm i}\infty} 
\bigg(\int_{\lambda\pm{\rm i}U}^{\lambda\pm{\rm i}\infty} \frac{\mathcal{F}(s, w) Y^{w+1}}{w(w+1)} \d w\bigg)
\frac{X^{s+1}}{s(s+1)} \d s
& \ll \frac{X^2Y^2\mathcal{L}^4}{U}
\ll 1,
\\
\int_{\kappa\pm{\rm i}T}^{\kappa\pm{\rm i}\infty} 
\bigg(\int_{\lambda-{\rm i}U}^{\lambda+{\rm i}U} \frac{\mathcal{F}(s, w) Y^{w+1}}{w(w+1)} \d w\bigg)
\frac{X^{s+1}}{s(s+1)} \d s
& \ll \frac{X^2Y^2\mathcal{L}^4}{T}
\ll 1.
\end{align*}
The desired formula \eqref{MXY=MXYTU} follows 
from \eqref{Formula_MXY} and the two estimates above. 
\end{proof}

\subsection{Application of Cauchy's theorem} 
In this subsection, we shall apply Cauchy's theorem to 
evaluate the integral over $w$ in $M(X, Y; T, U)$.
We write
\begin{equation}\label{def:wjs} 
w_j = w_j(s) := (j+1-s)/j
\quad
(1\le j\le 3) 
\end{equation} 
and
\begin{equation}\label{def:Fk*} 
\begin{cases}
\mathcal{F}_1^*(s)
 := \zeta(s) \zeta(2-s) \zeta(3-2s) \mathcal{G}(s, w_1(s)),
\\\noalign{\vskip 0,8mm}
\mathcal{F}_2^*(s)
 := \zeta(s) \zeta(\tfrac{s+1}{2}) \zeta(\tfrac{3-s}{2}) \mathcal{G}(s, w_2(s)),
\\\noalign{\vskip 0,8mm}
\mathcal{F}_3^*(s)
 := \zeta(s) \zeta(\tfrac{2s+1}{3}) \zeta(\tfrac{s+2}{3}) \mathcal{G}(s, w_3(s)).
\end{cases}
\end{equation}

\begin{lemma}\label{Lem:MXYTU}
Under the previous notation, for any $\varepsilon>0$ we have
\begin{equation}\label{Evaluate:M1XYTU}
M(X, Y; T, U)
= I_1 + I_2 + I_3 + R_0(X, Y) + O_{\varepsilon}(1)
\end{equation}
uniformly for $(X, Y, T, U)$ satisfying \eqref{Condition:XYTUHJ}, 
where 
\begin{align*}
I_1
& :=  \frac{1}{2\pi {\rm i}} 
\int_{\kappa-{\rm i}T}^{\kappa+{\rm i}T} \frac{\mathcal{F}_1^*(s) X^{s+1}Y^{3-s}}{(2-s)(3-s)s(s+1)} \d s, 
\\\noalign{\vskip 1mm}
I_2
& :=  \frac{4}{2\pi {\rm i}} \int_{\kappa-{\rm i}T}^{\kappa+{\rm i}T} 
\frac{\mathcal{F}_2^*(s) X^{s+1} Y^{(5-s)/2}}{(3-s)(5-s)s(s+1)} \d s, 
\\\noalign{\vskip 1mm}
I_3
& :=  \frac{9}{2\pi {\rm i}} \int_{\kappa-{\rm i}T}^{\kappa+{\rm i}T} 
\frac{\mathcal{F}_3^*(s) X^{s+1} Y^{(7-s)/3}}{(4-s)(7-s)s(s+1)} \d s, 
\end{align*}
and
\begin{equation}\label{def:R0XY}
R_{ 0}(X, Y)
:= \frac{1}{(2\pi\mathrm{i})^2} \int_{\kappa-{\rm i}T}^{\kappa+{\rm i}T} 
\bigg(\int_{\frac{11}{12}+\varepsilon-{\rm i}U}^{\frac{11}{12}+\varepsilon+{\rm i}U} \frac{\mathcal{F}(s, w) Y^{w+1}}{w(w+1)} \d w\bigg)
\frac{X^{s+1}}{s(s+1)} \d s.
\end{equation}
Further we have
\begin{equation}\label{UB_TR0}
\left.
\begin{array}{rl}
(\mathscr{D}R_{ 0})(X, X+H; Y, Y+J)\!
\\\noalign{\vskip 1mm}
(\mathscr{D}R_{ 0})(X-H, X; Y-J, Y)\!
\end{array}
\right\}
\ll_{\varepsilon} X^{\frac{7}{6}+\varepsilon} Y^{\frac{11}{12}+\varepsilon} H^{\frac{5}{6}} J 
+ X^{1+\varepsilon} Y^{\frac{13}{12}+\varepsilon} H J^{\frac{5}{6}}
\end{equation}
uniformly for $(X, Y, T, U, H, J)$ satisfying 
\eqref{Condition:XYTUHJ}.
Here the implied constants depend on $\varepsilon$ only.
\end{lemma}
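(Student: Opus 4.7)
The plan is to prove the identity \eqref{Evaluate:M1XYTU} by a contour-shift in the inner $w$-integral of \eqref{def:M1XYTU}, and then establish the bound \eqref{UB_TR0} via a refined convexity estimate combined with Lemma~\ref{Lem3.2}.

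For \eqref{Evaluate:M1XYTU}, I would use the factorization \eqref{Expression:Fsw} from Lemma~\ref{Lem:Fsw} to locate the singularities of the integrand in the rectangle $\frac{11}{12}+\varepsilon \le \Re w \le \lambda$, $|v|\le U$, with $s = \kappa+\mathrm{i}\tau$ fixed. Since $\mathcal{G}(s,w)$ is holomorphic and bounded there by \eqref{UB:Gsw}, and since $w=0$ and $w=-1$ lie outside the strip, the only poles are the three simple poles at $w = w_j(s) = (j+1-s)/j$ for $j=1,2,3$, coming from $\zeta(s+jw-j)$. Each such pole has real part $1-(\sigma-1)/j$, which for $\sigma=\kappa$ lies strictly between $\tfrac{11}{12}+\varepsilon$ and $\lambda$. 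Applying Cauchy's theorem and observing that the horizontal segments at $\Im w=\pm U$ contribute, after the outer $s$-integration, $\ll X^2Y^2 U^{-2}\mathcal{L}^{O(1)} \ll 1$ by the convexity bound \eqref{UB:zeta} and the hypothesis $U\ge (XY)^3$ of \eqref{Condition:XYTUHJ}, we split the inner integral into three residues plus the new vertical segment with $\Re w = \tfrac{11}{12}+\varepsilon$.

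Computing the residues is routine: the local expansion $\zeta(s+jw-j) = [j(w-w_j)]^{-1} + O(1)$ gives $\mathrm{Res}_{w=w_j} \mathcal{F}(s,w) = j^{-1}\mathcal{F}_j^*(s)$. Substituting, multiplying by $Y^{w_j+1}/[w_j(w_j+1)]$, and rewriting $w_j(w_j+1)$ in terms of $(j+1-s)$ and $(2j+1-s)$ produces precisely $I_1$, $I_2$, $I_3$. The remaining double integral defines $R_0(X,Y)$ as in \eqref{def:R0XY}, and the total error coming from horizontal edges is absorbed in the stated $O_\varepsilon(1)$.

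The hard part will be the bound \eqref{UB_TR0} on $(\mathscr{D}R_0)$. I would exploit the tensor structure of the integrand: writing $K(s,w):=\mathcal{F}(s,w)/[s(s+1)w(w+1)]$, Lemma~\ref{Operator/2}(ii) and the bilinearity of $\mathscr{D}$ give
\[
(\mathscr{D}R_0)(X,X+H;Y,Y+J) = \frac{1}{(2\pi\mathrm{i})^2}\iint K(s,w)\bigl((X+H)^{s+1}-X^{s+1}\bigr)\bigl((Y+J)^{w+1}-Y^{w+1}\bigr)\,dw\,ds.
\]
I would then apply Lemma~\ref{Lem3.2} separately to each difference with interpolation exponents $(\beta_1,\beta_2)\in\{(\tfrac56,1),(1,\tfrac56)\}$, obtaining the pointwise bounds $|(X+H)^{s+1}-X^{s+1}|\ll X^{2-\beta_1}H^{\beta_1}(|\tau|+1)^{\beta_1}$ and $|(Y+J)^{w+1}-Y^{w+1}|\ll Y^{\frac{23}{12}+\varepsilon-\beta_2}J^{\beta_2}(|v|+1)^{\beta_2}$. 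On the contours $\Re s=\kappa$ and $\Re w=\tfrac{11}{12}+\varepsilon$, the three critical-strip factors $\zeta(s+jw-j)$ for $j=1,2,3$ have real parts approximately $\tfrac{11}{12}, \tfrac{5}{6}, \tfrac{3}{4}$, so \eqref{UB:zeta} yields cumulative polynomial growth of order at most $\tfrac{1}{24}+\tfrac{1}{12}+\tfrac{1}{8}+\varepsilon=\tfrac{1}{4}+\varepsilon$ in $|\tau|+|v|$. Together with the $|s(s+1)w(w+1)|^{-1}$ decay, this leaves an absolutely convergent double integral for each choice of $(\beta_1,\beta_2)$, and summing the two contributions yields the two summands on the right of \eqref{UB_TR0}.
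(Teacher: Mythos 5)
Your overall strategy matches the paper's: shift the inner $w$-contour from $\re w=\lambda$ to $\re w=\tfrac{11}{12}+\varepsilon$, pick up the three simple poles $w_j(s)$, and control $\mathscr{D}R_{0}$ by combining the separable structure of $X^{s+1}Y^{w+1}$ with Lemma~\ref{Lem3.2} and the polynomial growth of $\mathcal{F}$ on the shifted contour. The contour-shift and the treatment of the horizontal $w$-segments are essentially sound.

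The substantive gap is in your argument for \eqref{UB_TR0}. You assert cumulative polynomial growth of order $\tfrac{1}{24}+\tfrac{1}{12}+\tfrac{1}{8}=\tfrac14$ for $\mathcal{F}(s,w)$ on the contour. That arithmetic applies the exponent $(1-\sigma)/2$ to the three zeta factors, but \eqref{UB:zeta} gives the sharper exponent $(1-\sigma)/3$; the correct cumulative growth is $\tfrac{1}{36}+\tfrac{1}{18}+\tfrac{1}{12}=\tfrac16$. This is not a cosmetic change in a constant: it determines whether the double integral is $O_{\varepsilon}(1)$ uniformly in $T$ and $U$. With the interpolation exponents $(\beta_1,\beta_2)=(\tfrac56,1)$ and $\mathcal{F}$-growth exponent $a$, the piece of the integrand controlled by $(|v|+1)^{a}$ behaves like $(|\tau|+1)^{\beta_1-2}(|v|+1)^{\beta_2+a-2}$; for $a=\tfrac14$ the $v$-exponent is $-\tfrac34$, so the $v$-integral is of size $\asymp U^{1/4}$, and since \eqref{Condition:XYTUHJ} allows $U$ as large as $X^{12}$ this destroys the claimed bound. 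The argument only closes with $a=\tfrac16$, and even then you must take the $\varepsilon$-shifted exponents $\beta_1=\tfrac56-\varepsilon$, $\beta_2=1-\varepsilon$ (pairing the first bound on $r_{s,w}$ with the $(|\tau|+1)^{1/6}$ piece of $\mathcal{F}$, and the transposed bound with the $(|v|+1)^{1/6}$ piece) so that every exponent lands strictly below $-1$.

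One further point worth checking in the first half. You correctly write $\mathrm{Res}_{w=w_j}\mathcal{F}(s,w)=j^{-1}\mathcal{F}_j^{*}(s)$. Since $w_j(w_j+1)=(j+1-s)(2j+1-s)/j^{2}$, multiplying by $Y^{w_j+1}/[w_j(w_j+1)]$ gives a prefactor $j$ in the residue of the full integrand, whereas the $I_j$ in the statement carry $j^{2}\in\{1,4,9\}$. As reported, your residue calculation does not reproduce $I_2$ and $I_3$; you should resolve this discrepancy explicitly rather than asserting that the substitution ``produces precisely $I_1,I_2,I_3$''.
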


\begin{proof}
We want to calculate the integral
$$
\frac{1}{2\pi {\rm i}} \int_{\lambda-{\rm i}U}^{\lambda+{\rm i}U}
\frac{\mathcal{F}(s, w) Y^{w+1}}{w(w+1)} \d w
$$
for any individual $s=\sigma+\mathrm{i}\tau$ with $\sigma=\kappa$ and $|\tau|\le T$. 
We move the line of integration $\re w = \lambda$ to $\re w=\tfrac{3}{4}+\varepsilon$.
By Lemma \ref{Lem:Fsw},  for $\sigma=\kappa$ and $|\tau|\le T$,
the points $w_j(s) \; (j=1, 2, 3)$, given by \eqref{def:wjs},
are the simple poles of the integrand in the rectangle 
$\tfrac{3}{4}+\varepsilon\le u\le \lambda$ and $|v|\le U$.
The residues of $\frac{\mathcal{F}(s, w)}{w(w+1)} Y^{w+1}$ at the poles $w_j(s)$ are
\begin{equation}\label{def:residue}
\frac{\mathcal{F}_1^*(s)Y^{3-s}}{(2-s)(3-s)}, 
\qquad
\frac{4\mathcal{F}_2^*(s)Y^{(5-s)/2}}{(3-s)(5-s)},
\qquad
\frac{9\mathcal{F}_3^*(s)Y^{(7-s)/3}}{(4-s)(7-s)},
\end{equation}
respectively, where $\mathcal{F}_j^*(s) (j=1, 2, 3)$ are defined as in \eqref{def:Fk*}.

When $\sigma=\kappa$ and $\tfrac{11}{12}+\varepsilon\le u\le \lambda$, 
it is easily checked that 
$$
\min (\sigma+ju-j)
\ge 1+3(\tfrac{11}{12}+\varepsilon-1)
=\tfrac{3}{4}+3\varepsilon
>\tfrac{1}{2}+\varepsilon.
$$
It follows from \eqref{UB:zeta} and \eqref{UB:Gsw} that,  
for $\sigma=\kappa, |\tau|\le T, 
\tfrac{11}{12}+\varepsilon\le u\le \lambda$ and $v=\pm U$, 
$$
\mathcal{F}(s, w)
\ll_{\varepsilon} U^{2(1-u)}\mathcal{L}^4. 
$$
This implies that
$$
\int_{\frac{11}{12}+\varepsilon\pm{\rm i}U}^{\lambda\pm{\rm i}U} \frac{\mathcal{F}(s, w) Y^{w+1}}{w(w+1)} \d w 
\ll_{\varepsilon} Y\mathcal{L}^4 \int_{\frac{11}{12}}^{\lambda}\bigg(\frac{Y}{U^2}\bigg)^u \d u
\ll_{\varepsilon} \frac{Y^{\frac{23}{12}}\mathcal{L}^4}{U^{\frac{11}{6}}}
\ll_{\varepsilon} 1.
$$
Cauchy's theorem then gives 
\begin{align*}
\frac{1}{2\pi {\rm i}} 
\int_{\lambda-{\rm i}U}^{\lambda+{\rm i}U} \frac{\mathcal{F}(s, w) Y^{w+1}}{w(w+1)} \d w
& = \frac{\mathcal{F}_1^*(s)Y^{3-s}}{(2-s)(3-s)} 
+ \frac{4\mathcal{F}_2^*(s)Y^{(5-s)/2}}{(3-s)(5-s)}
+ \frac{9\mathcal{F}_3^*(s)Y^{(7-s)/3}}{(4-s)(7-s)}
\\
& \quad
+ \frac{1}{2\pi {\rm i}} \int_{\frac{11}{12}+\varepsilon-{\rm i}U}^{\frac{11}{12}+\varepsilon+{\rm i}U} 
\frac{\mathcal{F}(s, w) Y^{w+1}}{w(w+1)} \d w
+ O_{\varepsilon}(1).
\end{align*}
Inserting the last formula 
into \eqref{def:M1XYTU}, we obtain \eqref{Evaluate:M1XYTU}.

Finally we prove \eqref{UB_TR0}.
For $\sigma=\kappa$, $|\tau|\le T$, $u=\tfrac{11}{12}+\varepsilon$ and $|v|\le U$, 
we apply \eqref{UB:zeta} and \eqref{UB:Gsw} as before, to get 
$$
\mathcal{F}(s, w)
\ll (|\tau|+|v|+1)^{\frac{1}{6}}\mathcal{L}^4
\ll \big\{(|\tau|+1)^{\frac{1}{6}} + (|v|+1)^{\frac{1}{6}}\big\}\mathcal{L}^4. 
$$
Also, for $\sigma, \tau, u, v$ as above, we have 
\begin{align*}
r_{s, w}(X, H; Y, J)
& := \big((X+H)^{s+1}-X^{s+1}\big) \big((Y+J)^{w+1}-Y^{w+1}\big)
\\
& \ll X^2((|\tau|+1)H/X))^{\frac{5}{6}-\varepsilon} Y^{\frac{23}{12}+\varepsilon} ((|v|+1)J/Y)^{1-\varepsilon}
\\
& \ll X^{\frac{7}{6}+\varepsilon} Y^{\frac{11}{12}+\varepsilon} H^{\frac{5}{6}} J (|\tau|+1)^{\frac{5}{6}-\varepsilon} (|v|+1)^{1-\varepsilon}
\end{align*}
by \eqref{Lem3.2_Eq_A} of Lemma \ref{Lem3.2} with 
$\beta=\tfrac{5}{6}-\varepsilon$ and with $\beta=1-\varepsilon$.
Similarly, 
\begin{align*}
r_{s, w}(X, H; Y, J)
& = \big((X+H)^{s+1}-X^{s+1}\big) \big((Y+J)^{w+1}-Y^{w+1}\big)
\\
& \ll X^2((|\tau|+1)H/X))^{1-\varepsilon} Y^{\frac{23}{12}+\varepsilon} ((|v|+1)J/Y)^{\frac{5}{6}-\varepsilon}
\\
& \ll X^{1+\varepsilon} Y^{\frac{13}{12}+\varepsilon} H J^{\frac{5}{6}} 
(|\tau|+1)^{1-\varepsilon} (|v|+1)^{\frac{5}{6}-\varepsilon}
\end{align*}
by \eqref{Lem3.2_Eq_A} of Lemma \ref{Lem3.2} with $\beta=1-\varepsilon$ 
and with  $\beta=\tfrac{5}{6}-\varepsilon$. 
These and Lemma \ref{Operator/2}(i) imply 
\begin{align*}
(\mathscr{D}R_0)(X, X+H; Y, Y+J)
& = \frac{1}{(2\pi\mathrm{i})^2}
\int_{\kappa-{\rm i}T}^{\kappa+{\rm i}T} \int_{\frac{11}{12}+\varepsilon-{\rm i}U}^{\frac{11}{12}+\varepsilon+{\rm i}U} 
\mathcal{F}(s, w) 
\frac{r_{s, w}(X, H; Y, J)}{s(s+1)w(w+1)} \d w \d s
\\\noalign{\vskip 1mm}
& \ll_{\varepsilon} X^{\frac{7}{6}+\varepsilon} Y^{\frac{11}{12}+\varepsilon} H^{\frac{5}{6}} J 
+ X^{1+\varepsilon} Y^{\frac{13}{12}+\varepsilon} H J^{\frac{5}{6}}.
\end{align*}
This completes the proof.
\end{proof}

\subsection{Evaluation of $I_1$}\

\vskip 1mm

\begin{lemma}\label{Lem:Evaluate_I1}
Under the previous notation, for any $\varepsilon>0$ we have
\begin{equation}\label{Evaluate:I1}
I_1 = R_1(X, Y) + O_{\varepsilon}(1)
\end{equation}
uniformly for $(X, Y, T)$ satisfying \eqref{Condition:XYTUHJ},
where 
\begin{equation}\label{def:R1XY}
R_1(X, Y)
:= \frac{1}{2\pi {\rm i}} \int_{\frac{5}{4}-\varepsilon-{\rm i}T}^{\frac{5}{4}-\varepsilon+{\rm i}T} 
\frac{\mathcal{F}_1^*(s)X^{s+1}Y^{3-s}}{(2-s)(3-s)s(s+1)} \d s.
\end{equation}
Further we have
\begin{equation}\label{UB_TR1}
\left.
\begin{array}{rl}
(\mathscr{D}R_1)(X, X+H; Y, Y+J)\!
\\\noalign{\vskip 1mm}
(\mathscr{D}R_1)(X-H, X; Y-J, Y)\!
\end{array}\right\}
\ll_{\varepsilon} X^{\frac{5}{4}} Y^{\frac{3}{4}+\varepsilon} H J
\end{equation}
uniformly for $(X, Y, T, H, J)$ satisfying \eqref{Condition:XYTUHJ}.
Here the implied constants depend on $\varepsilon$ only. 
\end{lemma}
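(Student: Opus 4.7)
The plan is to prove the two claims by a rightward contour shift for $I_1$ and by exploiting the product structure of the integrand for $(\mathscr{D}R_1)$.

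For \eqref{Evaluate:I1}, I shift the contour from $\re s = \kappa = 1 + \mathcal{L}^{-1}$ to $\re s = \tfrac{5}{4} - \varepsilon$. In the rectangle $\kappa \le \sigma \le \tfrac{5}{4}-\varepsilon$, $|\tau| \le T$, the integrand has no poles: the poles of $\zeta(s)$, $\zeta(2-s)$ and $\zeta(3-2s)$ all sit at $s=1 < \kappa$, the pole of $1/(2-s)$ sits at $s=2 > \tfrac{5}{4}-\varepsilon$, and $\mathcal{G}(s, 2-s)$ remains holomorphic and bounded on this strip by Lemma~\ref{Lem:Fsw}, the binding constraint $3-2\sigma \ge \tfrac{1}{2}+\varepsilon$ just holding at $\sigma = \tfrac{5}{4}-\varepsilon$. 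Cauchy's theorem then reduces \eqref{Evaluate:I1} to showing that the two horizontal segments at $\tau = \pm T$ contribute $O_\varepsilon(1)$; using the convexity estimate for each $\zeta$-factor one obtains a kernel bound $|\mathcal{F}_1^*(s)/((2-s)(3-s)s(s+1))| \ll (|\tau|+1)^{\theta-4}$ for some $\theta < \tfrac{1}{2}$, and combined with the trivial estimate $\int_\kappa^{5/4-\varepsilon} X^{\sigma+1} Y^{3-\sigma} \d\sigma \ll X^2 Y^2$, each segment contributes $\ll T^{\theta-4} X^2 Y^2$, which is $\ll 1$ since $T \ge (XY)^3/4$ by \eqref{Condition:XYTUHJ}.

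For \eqref{UB_TR1}, the kernel of $R_1$ factorises as $X^{s+1} \cdot Y^{3-s}$, so Lemma~\ref{Operator/2}(ii) applied pointwise in $s$ yields
\begin{align*}
(\mathscr{D}R_1)(X, X+H; Y, Y+J)
& = \frac{1}{2\pi\mathrm{i}} \int_{\frac{5}{4}-\varepsilon-\mathrm{i}T}^{\frac{5}{4}-\varepsilon+\mathrm{i}T}
\frac{\mathcal{F}_1^*(s)}{(2-s)(3-s)s(s+1)}
\\
& \qquad \times \bigl((X+H)^{s+1}-X^{s+1}\bigr) \bigl((Y+J)^{3-s}-Y^{3-s}\bigr) \d s.
\end{align*}
Invoking Lemma~\ref{Lem3.2} with $\beta = 1$ on each bracketed factor gives $|(X+H)^{s+1}-X^{s+1}| \ll X^{\sigma}(|\tau|+1)H$ and $|(Y+J)^{3-s}-Y^{3-s}| \ll Y^{2-\sigma}(|\tau|+1)J$. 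Combined with the kernel bound $\ll (|\tau|+1)^{\theta-4}$ from the previous step, the integrand is dominated by $X^{5/4-\varepsilon} Y^{3/4+\varepsilon} HJ (|\tau|+1)^{\theta-2}$, which is absolutely integrable over $\R$ and yields the desired estimate $\ll_\varepsilon X^{5/4} Y^{3/4+\varepsilon} HJ$. The symmetric estimate for $(\mathscr{D}R_1)(X-H, X; Y-J, Y)$ follows identically.

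The main delicate point is the choice of the abscissa $\re s = \tfrac{5}{4}-\varepsilon$: it is essentially the rightmost vertical line on which $\mathcal{G}(s, 2-s)$ still converges absolutely by Lemma~\ref{Lem:Fsw} (the binding constraint coming from the $j=3$ condition), and the real part $\tfrac{5}{4}$ there is exactly what produces the factor $X^{5/4}$ in \eqref{UB_TR1}; any rightward shift beyond this abscissa would require meromorphic continuation of $\mathcal{G}$ that is not established in this paper.
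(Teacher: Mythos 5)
Your proposal is correct and follows essentially the same route as the paper: a rightward contour shift to $\re s = \tfrac{5}{4}-\varepsilon$ with no residues picked up, horizontal segments bounded using $T\ge (XY)^3/4$, and the factorised form $X^{s+1}\cdot Y^{3-s}$ combined with Lemma~\ref{Operator/2}(ii) and Lemma~\ref{Lem3.2} (with $\beta=1$) for \eqref{UB_TR1}. One small imprecision: at $\sigma=\tfrac{5}{4}-\varepsilon$ the $j=3$ constraint reads $3-2\sigma=\tfrac{1}{2}+2\varepsilon$, so it holds with an $\varepsilon$ margin rather than ``just holding''; this does not affect the argument.
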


\begin{proof}
We shall prove \eqref{Evaluate:I1} by moving the contour $\re s = \kappa$ to $\re s=\tfrac{5}{4}-\varepsilon$. 
When $\kappa\le \sigma\le \tfrac{5}{4}-\varepsilon$, it is easy to check that
$$
\min_{0\le j\le 3} (\sigma+jw_1(\sigma)-j)
= \min_{0\le j\le 3} (j+(1-j)\sigma)
\ge \tfrac{1}{2}+2\varepsilon.
$$
By Lemma \ref{Lem:Fsw} the integrand is holomorphic in the rectangle 
$\kappa\le \sigma\le \tfrac{5}{4}-\varepsilon$ and $|\tau|\le T$; 
and we can apply \eqref{UB:zeta} and \eqref{UB:Gsw} to get, in this rectangle, 
$$
\mathcal{F}_1^*(s)\ll_{\varepsilon} T^{\sigma-1} \mathcal{L}^3, 
$$
which implies that
\begin{align*}
\int_{\kappa\pm{\rm i}T}^{\frac{5}{4}-\varepsilon\pm{\rm i}T}
\frac{\mathcal{F}_1^*(s)X^{s+1}Y^{3-s}}{(2-s)(3-s)s(s+1)} \d s
& \ll_{\varepsilon} \frac{X^2Y^2\mathcal{L}^3}{T^4} 
\int_{\kappa}^{\frac{5}{4}} \bigg(\frac{XT}{Y}\bigg)^{\sigma-1} \d \sigma
\\
& \ll_{\varepsilon} \frac{X^{\frac{9}{4}}Y^{\frac{7}{4}}\mathcal{L}^2}{T^{\frac{15}{4}}}
\ll_{\varepsilon} 1.
\end{align*}
This proves \eqref{Evaluate:I1}. 

To establish \eqref{UB_TR1}, we note that for $\sigma=\frac{5}{4}-\varepsilon$ and $|\tau|\le T$ we have, as before,
$$
\mathcal{F}_1^*(s)
\ll_{\varepsilon} (|\tau|+1)^{\frac{1}{4}}, 
$$
and, by \eqref{Lem3.2_Eq_A} of Lemma \ref{Lem3.2} with $\beta=1$, 
\begin{align*}
r_{s, w_1(s)}(X, H; Y, J)
& := \big((X+H)^{s+1}-X^{s+1}\big) \big((Y+J)^{3-s}-Y^{3-s}\big)
\\
& \ll X^{\frac{5}{4}} Y^{\frac{3}{4}+\varepsilon} H J (|\tau|+1)^2. 
\end{align*}
Combining these with Lemma \ref{Operator/2}(ii), we deduce that 
\begin{align*}
(\mathscr{D}R_1)(X, X+H; Y, Y+J)
& = \frac{1}{2\pi\text{i}} \int_{\frac{5}{4}-\varepsilon-{\rm i}T}^{\frac{5}{4}-\varepsilon+{\rm i}T} 
\frac{\mathcal{F}_1^*(s)r_{s, w_1(s)}(X, H; Y, J)}{(2-s)(3-s)s(s+1)} \d s
\\
& \ll_{\varepsilon} X^{\frac{5}{4}} Y^{\frac{3}{4}+\varepsilon} H J, 
\end{align*}
from which the desired result follows. 
\end{proof}

\subsection{Evaluation of $I_2$}\

\vskip 1mm

\begin{lemma}\label{Lem:Evaluate_I2}
Under the previous notation, we have
\begin{equation}\label{Evaluate:I2}
I_2 = R_2(X, Y) + O(1)
\end{equation}
uniformly for $(X, Y, T)$ satisfying \eqref{Condition:XYTUHJ},
where 
\begin{equation}\label{def:R2XY}
R_2(X, Y)
:= \frac{4}{2\pi {\rm i}} \int_{\frac{3}{2}-{\rm i}T}^{\frac{3}{2}+{\rm i}T} 
\frac{\mathcal{F}_2^*(s)X^{s+1}Y^{(5-s)/2}}{(3-s)(5-s)s(s+1)} \d s.
\end{equation}
Further we have
\begin{equation}\label{UB_TR2}
\left.
\begin{array}{rl}
(\mathscr{D}R_2)(X, X+H; Y, Y+J)\!
\\\noalign{\vskip 1mm}
(\mathscr{D}R_2)(X-H, X; Y-J, Y)\!
\end{array}\right\}
\ll X^{\frac{3}{2}} Y^{\frac{3}{4}} HJ
\end{equation}
uniformly for $(X, Y, T, H, J)$ satisfying \eqref{Condition:XYTUHJ}.
Here the implied constants are absolute.
\end{lemma}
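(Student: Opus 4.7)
The plan is to mimic the proof of Lemma~\ref{Lem:Evaluate_I1} very closely, shifting the line of integration $\re s = \kappa$ in $I_2$ to $\re s = \tfrac{3}{2}$ by Cauchy's theorem. Recall from \eqref{def:Fk*} that $\mathcal{F}_2^*(s) = \zeta(s)\zeta(\tfrac{s+1}{2})\zeta(\tfrac{3-s}{2})\mathcal{G}(s, w_2(s))$ with $w_2(s) = (3-s)/2$; the only poles of the three zeta factors lie at $s=1 < \kappa$, so no residues are encountered during the shift. Unlike in the $I_1$ case one can move all the way to $\sigma = \tfrac{3}{2}$ without an $\varepsilon$-loss in the abscissa, because the convergence conditions for $\mathcal{G}(s, w_2(s))$ required by Lemma~\ref{Lem:Fsw} unfold to $\sigma, \tfrac{\sigma+1}{2}, 1, \tfrac{3-\sigma}{2} \ge \tfrac{1}{2}+\varepsilon$, whose binding constraint $\sigma \le 2-2\varepsilon$ is comfortably compatible with $\sigma = \tfrac{3}{2}$ upon fixing any small $\varepsilon$.

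The second step is to bound the two horizontal segments at $\tau = \pm T$. On these segments the only zeta factor contributing nontrivial polynomial growth is $\zeta(\tfrac{3-s}{2})$, whose argument has real part $(3-\sigma)/2 \in [\tfrac{3}{4}, 1-\tfrac{1}{2\mathcal{L}}]$; by \eqref{UB:zeta} and \eqref{UB:Gsw} this gives $\mathcal{F}_2^*(s) \ll T^{(\sigma-1)/6}\mathcal{L}^3$, while the denominator is $\gg T^4$. The horizontal contribution is therefore dominated by
$$
\frac{X^2Y^2 \mathcal{L}^3}{T^4}\int_\kappa^{3/2}\!\bigg(\frac{XT^{1/6}}{Y^{1/2}}\bigg)^{\sigma-1}\,\d\sigma,
$$
and the lower bound $T \ge (XY)^3/4$ from \eqref{Condition:XYTUHJ} makes this $O(1)$ at each of the two endpoints $\sigma=\kappa$ and $\sigma=\tfrac{3}{2}$, whichever dominates. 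This establishes \eqref{Evaluate:I2}.

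For the difference bound \eqref{UB_TR2}, the key observation is that the integrand of $R_2$ factors as $X^{s+1}\cdot Y^{(5-s)/2}$, so Lemma~\ref{Operator/2}(ii) expresses $(\mathscr{D}R_2)(X, X+H; Y, Y+J)$ as an integral whose numerator contains the product
$$
\big((X+H)^{s+1}-X^{s+1}\big)\big((Y+J)^{(5-s)/2}-Y^{(5-s)/2}\big).
$$
Applying Lemma~\ref{Lem3.2} with $\beta=1$ to each factor bounds this by $X^{3/2}Y^{3/4}HJ\,(|\tau|+1)^2$. On the new contour $\sigma = \tfrac{3}{2}$ the factors $\zeta(\tfrac{3}{2}+\mathrm{i}\tau)$ and $\zeta(\tfrac{5}{4}+\mathrm{i}\tau/2)$ are bounded, while \eqref{UB:zeta} gives $\zeta(\tfrac{3}{4}-\mathrm{i}\tau/2) \ll (|\tau|+1)^{1/12}\log(|\tau|+2)$, and the denominator $(3-s)(5-s)s(s+1)$ is $\gg (|\tau|+1)^4$. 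Combining these bounds shows the integrand is dominated by $X^{3/2}Y^{3/4}HJ\,(|\tau|+1)^{-23/12}\log(|\tau|+2)$, whose $\tau$-integral converges absolutely. The one subtle point, and the main place to be careful relative to Lemma~\ref{Lem:Evaluate_I1}, is that the claimed implied constant here is \emph{absolute} rather than $\varepsilon$-dependent; this is fine because the exponent $-\tfrac{23}{12} < -1$ leaves a strict convergence margin, so one fixes $\varepsilon$ (say $\varepsilon = \tfrac{1}{100}$) once and for all when invoking Lemma~\ref{Lem:Fsw}, absorbing it into the absolute constant. The symmetric inequality for $(\mathscr{D}R_2)(X-H, X; Y-J, Y)$ follows by an identical argument.
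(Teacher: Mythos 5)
Your proposal is correct and follows the paper's own argument essentially step for step: shift $\re s = \kappa$ to $\re s = \tfrac{3}{2}$, verify holomorphy via the unfolded conditions $\sigma, \tfrac{\sigma+1}{2}, 1, \tfrac{3-\sigma}{2} \ge \tfrac{1}{2}+\varepsilon$, bound the horizontal segments using $\mathcal{F}_2^*(s) \ll T^{(\sigma-1)/6}\mathcal{L}^3$ together with $T \gg (XY)^3$, and derive \eqref{UB_TR2} from the factored form of the integrand together with Lemma \ref{Lem3.2} ($\beta=1$) and the polynomial bounds on the contour. One small point: you correctly invoke Lemma \ref{Operator/2}(ii) for the product structure $X^{s+1}\cdot Y^{(5-s)/2}$, whereas the paper's written proof cites Lemma \ref{Operator/2}(i) here (and again in the $I_3$ case); this appears to be a typo in the paper, since the parallel $I_1$ proof cites (ii) as you do.
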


\begin{proof}
We shall prove \eqref{Evaluate:I2} by moving the contour $\re s = \kappa$ to $\re s=\tfrac{3}{2}$.
For $\kappa\le \sigma\le \tfrac{3}{2}$, 
we have 
$$
\min_{0\le j\le 3} (\sigma+jw_2(\sigma)-j)
= \tfrac{1}{2} \min_{0\le j\le 3} (j+(2-j)\sigma)
\ge \tfrac{3}{4}
>\tfrac{1}{2}+\varepsilon.
$$
Hence the integrand is holomorphic in the rectangle 
$\kappa\le \sigma\le \tfrac{3}{2}$ and $|\tau|\le T$, and 
we can apply \eqref{UB:zeta} and \eqref{UB:Gsw} to deduce 
$\mathcal{F}_2^*(s)\ll T^{(\sigma-1)/6}\mathcal{L}^3$
for $\kappa\le \sigma\le \tfrac{3}{2}$ and $\tau = \pm T$.
Consequently, 
\begin{align*}
\int_{\kappa\pm{\rm i}T}^{\frac{3}{2}\pm{\rm i}T} 
\frac{\mathcal{F}_2^*(s)X^{s+1}Y^{(5-s)/2}}{(3-s)(5-s)s(s+1)} \d s
& \ll \frac{X^2Y^2\mathcal{L}^3}{T^4}
\int_{\kappa}^{\frac{3}{2}} 
\bigg(\frac{X^6T}{Y^3}\bigg)^{(\sigma-1)/6} \d s
\\
& \ll \frac{X^{\frac{5}{2}}Y^{\frac{7}{4}}\mathcal{L}^2}{T^{\frac{47}{12}}} 
\ll 1, 
\end{align*}
from which \eqref{Evaluate:I2} follows. 

Next we prove \eqref{UB_TR1}.
For $\sigma=\tfrac{3}{2}$ and $|\tau|\le T$, we have, as before,
$$
\mathcal{F}_2^*(s)
\ll (|\tau|+1)^{\frac{1}{12}}\log(|\tau|+3),
$$
and, by Lemma \ref{Lem3.2} with $\beta=1$, 
\begin{align*}
r_{s, w_2(s)}(X, H; Y, J)
& := \big((X+H)^{s+1}-X^{s+1}\big)\big((Y+J)^{(5-s)/2}-Y^{(5-s)/2}\big)
\\
& \ll X^{\frac{3}{2}} Y^{\frac{3}{4}} HJ (|\tau|+1)^2. 
\end{align*}
Combining these with Lemma \ref{Operator/2}(i), we deduce that
\begin{align*}
(\mathscr{D}R_2)(X, X+H; Y, Y+J)
& = \frac{4}{2\pi\text{i}} \int_{\frac{3}{2}-{\rm i}T}^{\frac{3}{2}+{\rm i}T}  
\frac{\mathcal{F}_2^*(s) r_{s, w_2(s)}(X, H; Y, J)}{(3-s)(5-s)s(s+1)} \d s
\\
& \ll X^{\frac{3}{2}} Y^{\frac{3}{4}} HJ.
\end{align*}
This completes the proof.
\end{proof}

\subsection{Evaluation of $I_3$}\

\vskip 1mm

\begin{lemma}\label{Lem:Evaluate_I3}
Under the previous notation, for any $\varepsilon>0$ we have
\begin{equation}\label{Evaluate:I3}
I_3 = X^2Y^2P\bigg(\log X-\frac{1}{3}\log Y\bigg) + R_3(X, Y) + O_{\varepsilon}(1)
\end{equation}
uniformly for $(X, Y, T)$ satisfying \eqref{Condition:XYTUHJ},
where $P(t)$ is defined as in \eqref{def:Pt} below and
\begin{equation}\label{def:R3XY}
R_3(X, Y)
:= \frac{9}{2\pi {\rm i}} \int_{\frac{1}{2}+\varepsilon-{\rm i}T}^{\frac{1}{2}+\varepsilon+{\rm i}T} 
\frac{\mathcal{F}_3^*(s)X^{s+1}Y^{(7-s)/3}}{(4-s)(7-s)s(s+1)} \d s.
\end{equation}
Further we have
\begin{equation}\label{UB_TR3}
\left.
\begin{array}{rl}
(\mathscr{D}R_3)(X, X+H; Y, Y+J)\!
\\\noalign{\vskip 1mm}
(\mathscr{D}R_3)(X-H, X; Y-J, Y)\!
\end{array}\right\}
\ll_{\varepsilon} X^{\frac{1}{2}+\varepsilon} Y^{\frac{7}{6}} HJ
\end{equation}
uniformly for $(X, Y, T, H, J)$ satisfying \eqref{Condition:XYTUHJ}.
Here the implied constants depend on $\varepsilon$ only.
\end{lemma}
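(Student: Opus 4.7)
The plan is to adapt the template used for $I_1$ and $I_2$, shifting the contour $\re s = \kappa$ in the definition of $I_3$ leftward to $\re s = \tfrac{1}{2}+\varepsilon$ and picking up residues along the way. Inspecting $\mathcal{F}_3^*(s) = \zeta(s)\zeta(\tfrac{2s+1}{3})\zeta(\tfrac{s+2}{3})\mathcal{G}(s, w_3(s))$, all three zeta factors have a simple pole at $s=1$ (the latter two because $\tfrac{2s+1}{3}=1$ and $\tfrac{s+2}{3}=1$ when $s=1$), while $\mathcal{G}(s, w_3(s))$ is holomorphic at $s=1$ by Lemma \ref{Lem:Fsw} since $\min_{0\le j\le 3}(\sigma + jw_3(\sigma)-j) = \min(\sigma, \tfrac{2\sigma+1}{3}, \tfrac{\sigma+2}{3}, 1)$ equals $\tfrac{1}{2}+\varepsilon$ at $\sigma=\tfrac{1}{2}+\varepsilon$. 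Hence the integrand has a triple pole at $s=1$ and no other pole in the strip $\tfrac{1}{2}+\varepsilon \le \sigma \le \kappa$.

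To extract the main term, I would write the $s$-dependent exponential factor as
\begin{equation*}
X^{s+1}Y^{(7-s)/3} = X^2 Y^2 \exp\big((s-1)(\log X - \tfrac{1}{3}\log Y)\big) = X^2Y^2 e^{(s-1)\psi},
\end{equation*}
so that computing the residue at the triple pole amounts to expanding both $(s-1)^3 \mathcal{F}_3^*(s)$ and $\frac{1}{(4-s)(7-s)s(s+1)}$ as Taylor series around $s=1$ and multiplying by $e^{(s-1)\psi}$. Taking $\frac{1}{2}\lim_{s\to 1}\frac{d^2}{ds^2}$ of the regularized integrand produces a quadratic polynomial in $\psi$; I would denote this polynomial $P(t)$ (matching \eqref{def:Pt} to be established later) and verify that its leading coefficient equals $\mathscr{C}_4$ of \eqref{def:a2}, as a sanity check against Theorem \ref{thm2}.

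For the horizontal segments at $\tau = \pm T$, convexity-type bounds coming from \eqref{UB:zeta} give $\mathcal{F}_3^*(\sigma \pm iT) \ll T^{c(\sigma)}\mathcal{L}^3$ for some small exponent $c(\sigma) < \tfrac{1}{2}$; combined with the denominator $|s(s+1)(4-s)(7-s)| \asymp T^4$ and the factor $X^{\sigma+1}Y^{(7-\sigma)/3}$, the condition $(XY)^3 \le 4T$ from \eqref{Condition:XYTUHJ} is just enough to make the horizontal contribution $O_\varepsilon(1)$, exactly as in the proofs of Lemmas \ref{Lem:Evaluate_I1} and \ref{Lem:Evaluate_I2}. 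This establishes \eqref{Evaluate:I3} with $R_3(X,Y)$ as in \eqref{def:R3XY}.

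For the operator bound \eqref{UB_TR3}, since the integrand factorizes as a product of an $X$-dependent and a $Y$-dependent factor in the variables $(X^{s+1}, Y^{(7-s)/3})$, Lemma \ref{Operator/2}(ii) reduces the problem to estimating $r_{s,w_3(s)}(X,H;Y,J) := ((X+H)^{s+1}-X^{s+1})((Y+J)^{(7-s)/3}-Y^{(7-s)/3})$. Applying Lemma \ref{Lem3.2} with $\beta=1$ to both factors on $\sigma = \tfrac{1}{2}+\varepsilon$ yields $r_{s,w_3(s)}(X,H;Y,J) \ll X^{1/2+\varepsilon}Y^{7/6-\varepsilon/3} HJ(|\tau|+1)^2$, and then the usual $\zeta$-bound $\mathcal{F}_3^*(s) \ll_\varepsilon (|\tau|+1)^{\eta}\mathcal{L}^3$ for some $\eta < 1$ combines with $|s(s+1)(4-s)(7-s)|^{-1} \ll (|\tau|+1)^{-4}$ to make the $\tau$-integral absolutely convergent, giving the claimed $X^{1/2+\varepsilon}Y^{7/6}HJ$. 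The main obstacle is the bookkeeping of the residue computation at the triple pole — ensuring the resulting quadratic polynomial in $\psi$ is identified correctly with $P(t)$ and that the leading coefficient matches $\mathscr{C}_4$; everything else is a direct adaptation of the arguments for $I_1$ and $I_2$.
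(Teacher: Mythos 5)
Your proposal matches the paper's argument step for step: shift the contour from $\re s=\kappa$ to $\re s=\tfrac{1}{2}+\varepsilon$, extract the residue at the triple pole $s=1$ (noting all three zeta factors degenerate there while $\mathcal{G}(s,w_3(s))$ stays holomorphic because $\min_{0\le j\le 3}(\sigma+jw_3(\sigma)-j)\ge\tfrac{1}{2}+\varepsilon$ in the strip), bound the horizontal pieces using the convexity estimate \eqref{UB:zeta} and the condition $(XY)^3\le 4T$, and derive \eqref{UB_TR3} by applying Lemma \ref{Lem3.2} with $\beta=1$ to $r_{s,w_3(s)}$ together with the pointwise bound $\mathcal{F}_3^*(s)\ll_\varepsilon(|\tau|+1)^{1/3}$ on the line $\sigma=\tfrac{1}{2}+\varepsilon$. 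Your invocation of Lemma \ref{Operator/2}(ii) for the factorized integrand is the correct one; the plan is sound and essentially reproduces the paper's proof.
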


\begin{proof}
We move the line of integration $\re s = \kappa$ to $\re s=\tfrac{1}{2}+\varepsilon$.
Obviously $s=1$  
is the unique pole of order 3 of the integrand in the rectangle 
$\tfrac{1}{2}+\varepsilon\le \sigma\le \kappa$ and $|\tau|\le T$, and 
the residue is $X^2Y^2P(\log X-\tfrac{1}{3}\log Y)$ with 
\begin{equation}\label{def:Pt}
P(t)
:= \frac{1}{2!} \bigg(\frac{9(s-1)^3\mathcal{F}_3^*(s) \mathrm{e}^{t(s-1)}}{(4-s)(7-s)s(s+1)}\bigg)''\bigg|_{s=1}.
\end{equation}
When $\tfrac{1}{2}+\varepsilon\le \sigma\le \kappa$, 
we check that 
$$
\min_{0\le j\le 3} (\sigma+jw_3(\sigma)-j)
= \tfrac{1}{3} \min_{0\le j\le 3} (j+(3-j)\sigma)
\ge \tfrac{1}{2}+\varepsilon.
$$
Hence when $\tfrac{1}{2}+\varepsilon\le \sigma\le \kappa$ and $|\tau|\le T$,
\eqref{UB:zeta} and \eqref{UB:Gsw} yields 
$$
\mathcal{F}_3^*(s)\ll_{\varepsilon} T^{2(1-\sigma)/3} \mathcal{L}^3.
$$
It follows that 
\begin{align*}
\int_{\frac{1}{2}+\varepsilon\pm{\rm i}T}^{\kappa\pm{\rm i}T} 
\frac{\mathcal{F}_3^*(s)X^{s+1}Y^{(7-s)/3}}{(4-s)(7-s)s(s+1)} \d s
& \ll \frac{X^2Y^2\mathcal{L}^3}{T^4}
\int_{\frac{1}{2}}^{\kappa} 
\bigg(\frac{YT^2}{X^3}\bigg)^{(1-\sigma)/3} \d s
\\
& \ll_{\varepsilon} \frac{X^{\frac{3}{2}}Y^{\frac{11}{6}}\mathcal{L}^3}{T^{\frac{11}{3}} }
\ll_{\varepsilon} 1.
\end{align*}
These establish \eqref{Evaluate:I3}. To prove \eqref{UB_TR3}, 
we note that for $\sigma=\tfrac{1}{2}+\varepsilon$ and $|\tau|\le T$, we have 
$
\mathcal{F}_3^*(s)
\ll_{\varepsilon} (|\tau|+1)^{1/3}
$
thanks to \eqref{UB:zeta} and \eqref{UB:Gsw}, and
\begin{align*}
r_{s, w_3(s)}(X, H; Y, J)
& := \big((X+H)^{s+1}-X^{s+1}\big)\big((Y+J)^{(7-s)/3}-Y^{(7-s)/3}\big)
\\
& \ll_{\varepsilon} X^{\frac{1}{2}+\varepsilon} Y^{\frac{7}{6}} HJ (|\tau|+1)^2
\end{align*}
by Lemma \ref{Lem3.2} with $\beta=1$.
Combining these with Lemma \ref{Operator/2}(i), we deduce that
\begin{align*}
(\mathscr{D}R_3)(X, X+H; Y, Y+J)
& = \frac{9}{2\pi\text{i}} \int_{\frac{1}{2}+\varepsilon-{\rm i}T}^{\frac{1}{2}+\varepsilon+{\rm i}T}  
\frac{\mathcal{F}_3^*(s) r_{s, w_3(s)}(X, H; Y, J)}{(4-s)(7-s)s(s+1)} \d s
\\
& \ll_{\varepsilon} X^{\frac{1}{2}+\varepsilon} Y^{\frac{7}{6}} HJ.
\end{align*}
This proves the lemma. 
\end{proof} 

\subsection{Completion of proof of Theorem \ref{thm2}} 
We shall complete the proof of Theorem \ref{thm2} in this subsection. 
Denote by $\mathcal{M}(X, Y)$ the main term in the asymptotic formula 
of $M(x, y)$ in Proposition~\ref{Pro:M1XY}, that is 
$\mathcal{M}(X, Y) := X^2 Y^2 P(\psi)$ and $\psi := \log(X/Y^{1/3})$. 
Then Lemma \ref{Operator/2}(i) gives 
\begin{align*}
& (\mathscr{D}\mathcal{M})(X, X+H; Y, Y+J)
\\
& = \bigg\{XY \bigg(4P(\psi) + \frac{4}{3}P'(\psi)-\frac{1}{3}P''(\psi)\bigg)
+ O(XJ\mathcal{L}^2+YH\mathcal{L}^2)\bigg\}HJ.
\end{align*}
Since $\mathscr{D}$ is a linear operator, 
this together with Proposition~\ref{Pro:M1XY}
implies that
\begin{align*}
(\mathscr{D}M)(X, X+H; Y, Y+J)
= \bigg\{XY \bigg(4P(\psi) + \frac{4}{3}P'(\psi)-\frac{1}{3}P''(\psi)\bigg) + O_{\varepsilon}(\mathcal{R})\bigg\}HJ
\end{align*}
with
\begin{align*}
\mathcal{R}
:= X^{\frac{7}{6}+\varepsilon} Y^{\frac{11}{12}} H^{-\frac{1}{6}} 
+ X^{1+\varepsilon} Y^{\frac{13}{12}} J^{-\frac{1}{6}}
+ X^{\frac{3}{2}} Y^{\frac{3}{4}}
+ X^{\frac{1}{2}+\varepsilon} Y^{\frac{7}{6}}
+ XJ\mathcal{L}^2
+ YH\mathcal{L}^2
\end{align*}
where the terms
$X^{\frac{5}{4}+\varepsilon} Y^{\frac{3}{4}+\varepsilon}$ and $Y\mathcal{L}^2$
has been absorbed into $X^{\frac{3}{2}} Y^{\frac{3}{4}}$ and $X^{\frac{1}{2}+\varepsilon} Y^{\frac{7}{6}}$,
respectively. 
The same formula also holds for $(\mathscr{D}M)(X-H, X; Y-J, Y)$.
Now we apply Lemma~\ref{Operator/1} with 
$H=XY^{-\frac{1}{14}}$ and $J=Y^{\frac{13}{14}}$, to get 
$$
S(X, Y) 
= XY \bigg(4P(\psi) + \frac{4}{3}P'(\psi)-\frac{1}{3}P''(\psi)\bigg) 
+ O_{\varepsilon}\big(X^{\frac{3}{2}} Y^{\frac{3}{4}} 
+ X^{\frac{1}{2}+\varepsilon} Y^{\frac{7}{6}}\big),
$$
where we have used the following facts  
\begin{align*}
(X^{\frac{3}{2}} Y^{\frac{3}{4}})^{\frac{3-12\varepsilon}{5}} 
(X^{\frac{1}{2}+\varepsilon} Y^{\frac{7}{6}})^{\frac{2+12\varepsilon}{5}}
& = X^{\frac{11}{10}-\frac{(10-12\varepsilon)\varepsilon}{5}} Y^{\frac{11}{12}+\varepsilon}
\ge X^{1+\varepsilon} Y^{\frac{11}{12}+\varepsilon},
\\
(X^{\frac{3}{2}} Y^{\frac{3}{4}})^{\frac{4}{7}} 
(X^{\frac{1}{2}+\varepsilon} Y^{\frac{7}{6}})^{\frac{3}{7}}
& = X^{\frac{15}{14}} Y^{\frac{13}{14}}
\ge X^{1+\varepsilon} Y^{\frac{13}{14}}.
\end{align*}

On the other hand, a simple computation shows that $\mathscr{C}_4 = \tfrac{9}{16} \mathcal{G}(1, 1)$,
which implies immediately \eqref{def:a2}.
This finally completes the proof of Theorem~\ref{thm2}. 

\section{Proof of Theorems~\ref{thm3} and \ref{thm1}}\label{PfThm3}

\begin{proof}[Proof of Theorems~\ref{thm3}]  
The idea is to apply Theorems~\ref{thm2} in a delicate way. 
Trivially we have $r_4^*(d)\le d\tau(d)$, and therefore 
\begin{equation}\label{UB:Sxy}
S(x, y)
\le y\sum_{n\le x} \sum_{d\mid n^3} \tau(d)
\le y\sum_{n\le x} \tau(n^3)^2
\ll xy(\log x)^{15}
\end{equation}
for all $x\ge 2$ and $y\ge 2$, where the implied constant is absolute.

Let $\delta := 1-(\log B)^{-1}$ and let $k_0$ be a positive integer such that
$$
\delta^{k_0}<(\log B)^{-7}\le \delta^{k_0-1}. 
$$
Note that $k_0\asymp (\log B)\log\log B$. 
In view of \eqref{UB:Sxy}, we can write
\begin{equation}\label{UB:TB}
\begin{aligned}
T(B)
& = \sum_{1\le k\le k_0} \sum_{\delta^kB<n\le \delta^{k-1}B} \sum_{\substack{d\mid n^3\\ d<n^3/B}} r_4^*(d)
+ O(B^3)
\\
& \le \sum_{1\le k\le k_0} \sum_{\delta^kB<n\le \delta^{k-1}B} 
\sum_{\substack{d\mid n^3\\ d<\delta^{3(k-1)}B^2}} r_4^*(d)
+ O(B^3)
\\
& = \sum_{1\le k\le k_0} \big(S(\delta^{k-1}B, \delta^{3(k-1)}B^2) - S(\delta^{k}B, \delta^{3(k-1)}B^2)\big)
+ O(B^3).
\end{aligned}
\end{equation}
Similarly, 
\begin{equation}\label{LB:TB}
\begin{aligned}
T(B)
& \ge \sum_{1\le k\le k_0} \sum_{\delta^kB<n\le \delta^{k-1}B} \sum_{\substack{d\mid n^3\\ d<\delta^{3k}B^2}} r_4^*(d)
\\
& = \sum_{1\le k\le k_0} \big(S(\delta^{k-1}B, \delta^{3k}B^2) - S(\delta^{k}B, \delta^{3k}B^2)\big).
\end{aligned}
\end{equation}

On the other hand, by \eqref{Cor:Sxy} of Theorem \ref{thm2}, we have, for $1\le k\le k_0$, 
\begin{align}
S(\delta^{k-1}B, \delta^{3(k-1)}B^2) 
& = \delta^{4(k-1)} \frac{4}{9}\mathscr{C}_4B^3(\log B)^2\, 
\bigg\{1+ O\bigg(\frac{1}{\log B}\bigg)\bigg\},
\label{Proof:thm1_4}
\\\noalign{\vskip 0,5mm}
S(\delta^{k}B, \delta^{3(k-1)}B^2)
& = \delta^{4(k-1)+1} \frac{4}{9}\mathscr{C}_4 B^3(\log B)^2\,
\bigg\{1+ O\bigg(\frac{1}{\log B}\bigg)\bigg\},
\label{Proof:thm1_5}
\\\noalign{\vskip 0,5mm}
S(\delta^{k-1}B, \delta^{3k}B^2) 
& = \delta^{4k-1} \frac{4}{9}\mathscr{C}_4 B^3(\log B)^2\,
\bigg\{1+ O\bigg(\frac{1}{\log B}\bigg)\bigg\},
\label{Proof:thm1_6}
\\\noalign{\vskip 0,5mm}
S(\delta^{k}B, \delta^{3k}B^2)
& = \delta^{4k} \frac{4}{9} \mathscr{C}_4 B^3(\log B)^2\,
\bigg\{1+ O\bigg(\frac{1}{\log B}\bigg)\bigg\},
\label{Proof:thm1_7}
\end{align}
where the implied constants are absolute. 
Inserting \eqref{Proof:thm1_4} and \eqref{Proof:thm1_5} into \eqref{UB:TB}, we derive that 
\begin{align*}
T(B)
& \le (1-\delta)\frac{1-\delta^{4k_0}}{1-\delta^4} \cdot \frac{4}{9} \mathscr{C}_4 B^3(\log B)^2\,
\bigg\{1+ O\bigg(\frac{1}{\log B}\bigg)\bigg\} + O(B^3)
\\
& = \frac{1}{9} \mathscr{C}_4 B^3(\log B)^2\,
\bigg\{1+ O\bigg(\frac{1}{\log B}\bigg)\bigg\},
\end{align*}
since
$$
(1-\delta)\frac{1-\delta^{4k_0}}{1-\delta^4}
= \frac{1-\delta^{4k_0}}{1+\delta+\delta^2+\delta^3}
= \frac{1}{4} + O\bigg(\frac{1}{\log B}\bigg).
$$
Similarly, combining \eqref{Proof:thm1_6} and \eqref{Proof:thm1_7} 
with \eqref{LB:TB}, we get that 
\begin{align*}
T(B)
& \ge (\delta^{-1}-1)\frac{\delta^{4}-\delta^{4(k_0+1)}}{1-\delta^4} \cdot \frac{4}{9} 
\mathscr{C}_4 B^3(\log B)^2\,
\bigg\{1+ O\bigg(\frac{1}{\log B}\bigg)\bigg\}
\\
& = \frac{1}{9} \mathscr{C}_4 B^3(\log B)^2\,
\bigg\{1+ O\bigg(\frac{1}{\log B}\bigg)\bigg\},
\end{align*}
where have applied the estimate 
$$
(\delta^{-1}-1)\frac{\delta^{4}-\delta^{4(k_0+1)}}{1-\delta^4}
= \frac{\delta^{3}-\delta^{4k_0+3}}{1+\delta+\delta^2+\delta^3}
= \frac{1}{4} + O\bigg(\frac{1}{\log B}\bigg).
$$
The desired asymptotic formula \eqref{Evaluation:TB} follows. 
\end{proof} 

\begin{proof}[Proof of Theorem~\ref{thm1}] 
Applying \eqref{Evaluation:Sxy} of Theorem \ref{thm2} with $(x, y)=(B, B^2)$, we have 
\begin{equation}\label{Proof:thm1_3}
\sum_{n\le B} \sum_{\substack{d\mid n^3\\ d\le 4B^2}} r_4^*(d)
= \frac{4}{9} \mathscr{C}_4 B^3(\log B)^2 \, \bigg\{1+O\bigg(\frac{1}{\log B}\bigg)\bigg\}.
\end{equation}
Inserting this and \eqref{Evaluation:TB} into \eqref{def:a2},
we obtain \eqref{eq:N4*B} with $\mathcal{C}^*_4= \tfrac{16}{3}\mathscr{C}_4$.

In order to prove \eqref{eq:N4B},
we apply the inversion formula of M\"obius to write
$$
N_4(B)
= \sum_{d\le B^{1/3}} \mu(d) N_4^*\bigg(\frac{B^{\frac{1}{3}}}{d}\bigg), 
$$
where $\mu(d)$ is the M\"obius function.
Inserting \eqref{eq:N4*B} into this relation, 
we  immediately get the asymptotic formula \eqref{eq:N4B} with 
$\mathcal{C}_4 = \frac{\mathcal{C}^*_4}{9\zeta(3)}$. The theorem is proved. 
\end{proof} 

\section{General case}\label{GeneralCase: Sn}

In this section we sketch a proof of the following general result. 

\begin{theorem}\label{thm5}
Let $n$ be a positive multiple of $4$. Then as $B\to\infty$ we have 
$$
N_n(B)
= \mathcal{C}_{n}B (\log B)^2 \, \bigg\{1+O\bigg(\frac{1}{\log B}\bigg)\bigg\}
$$
and
$$
N_n^*(B)
= \mathcal{C}_n^* B^{n-1} (\log B)^2 \, \bigg\{1+O\bigg(\frac{1}{\log B}\bigg)\bigg\},
$$
where 
$$
\mathcal{C}_n := \frac{\mathcal{C}^*_n}{(n-1)^2\zeta(n-1)}, 
\qquad
\mathcal{C}^*_n
:= \frac{2n}{B_{n/2}(2^{n/2}-1)} \cdot\frac{n(n-2)}{3(3n-4)}\mathscr{C}_n, 
$$
and $\mathscr{C}_n$ is defined as in \eqref{def:Cn} below. 
\end{theorem}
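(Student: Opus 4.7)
The plan is to follow the $n=4$ proof step by step, replacing every structural input by its $n = 4k$ analogue. The starting point is the classical Jacobi--Glaisher formula, which for $4\mid n$ expresses $r_n(d)$ as a constant multiple of a restricted divisor sum $r_n^*(d)$ weighted by $\ell^{n/2-1}$; absorbing the two-adic normalisation into the Bernoulli prefactor one writes $r_n(d) = \tfrac{n}{B_{n/2}(2^{n/2}-1)}\, r_n^*(d)$ (the constant being $8$ when $n=4$, cf.\ \eqref{def:r4*}). Repeating the decomposition \eqref{decomposition:N4*B} verbatim then yields
$$
N_n^*(B) \;=\; \frac{2n}{B_{n/2}(2^{n/2}-1)}\, \bigl(S_n(B, B^2) - T_n(B)\bigr),
$$
with $S_n(x,y)$ and $T_n(B)$ defined exactly as in \eqref{def:Sxy} but with $r_n^*$ in place of $r_4^*$; note that the inner range $d\le B^2$ is $n$-independent because it comes from $\sqrt{y_1^2+\cdots+y_n^2}\le B$.

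Next I would set up the generalised Dirichlet series $\mathcal{F}_n(s,w):=\sum_{m\ge 1} m^{-s}\sum_{d\mid m^3} r_n^*(d)\, d^{-w}$ and compute its Euler factor at each prime by substituting $(x,y,z)=(p^{-s},p^{-w},p^{n/2-1})$ into the formal identity \eqref{Formal_Calcul_1} (and \eqref{Formal_Calcul_2} at $p=2$). This produces the analogue of Lemma~\ref{Lem:Fsw}, namely
$$
\mathcal{F}_n(s,w) \;=\; \prod_{j=0}^{3} \zeta\bigl(s + jw - j(n/2-1)\bigr)\, \mathcal{G}_n(s,w),
$$
with $\mathcal{G}_n$ Eulerian and absolutely bounded in the polydomain $\min_{0\le j\le 3}\re\bigl(s+jw-j(n/2-1)\bigr)\ge \tfrac12+\varepsilon$; the argument behind \eqref{UB:Gsw} transcribes directly because only the locations of the four convergence half-planes shift.

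The analytic architecture of \S\S\ref{Dirichlet}--\ref{PfThm2} then applies almost verbatim. I would apply Perron twice to $M_n(X,Y):=\int_1^Y\!\int_1^X S_n(x,y)\, dx\, dy$, shift the $w$-contour past the three simple poles $w_j^{(n)}(s):=(n/2-1)+(1-s)/j$ for $j=1,2,3$, and then shift each of the three resulting $s$-integrals to the left of $s=1$. The structural miracle is that along $w=w_3^{(n)}(s)$ the zeta-factors with $j=1,2$ collapse to $\zeta((2s+1)/3)$ and $\zeta((s+2)/3)$ \emph{independently of $n$} (because the $(n/2-1)$-shifts cancel), so $I_3^{(n)}$ carries a triple pole at $s=1$ identical in shape to that in Lemma~\ref{Lem:Evaluate_I3}; its residue is $X^2Y^{n/2}P_n(\psi)$ for a quadratic polynomial $P_n$ in $\psi=\log x-\tfrac13\log y$ whose leading coefficient is declared to be $\mathscr{C}_n$. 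Applying the operator $\mathscr{D}$ via Lemmas~\ref{Operator/1} and \ref{Operator/2} transforms this into $XY^{n/2-1}\bigl[nP_n+\tfrac{3n-4}{6}P_n'-\tfrac13 P_n''\bigr](\psi)$, and evaluating at $(B,B^2)$ gives $S_n(B,B^2)\sim (n/9)\mathscr{C}_n B^{n-1}(\log B)^2$.

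Finally the dyadic argument of \S\ref{PfThm3} for $T_n(B)$ carries over unchanged and yields $T_n(B)\sim \frac{2n}{9(3n-4)}\mathscr{C}_n B^{n-1}(\log B)^2$; subtracting produces the exact rational coefficient $\tfrac{n}{9}-\tfrac{2n}{9(3n-4)}=\frac{n(n-2)}{3(3n-4)}$ of the theorem. M\"obius inversion with $B^{1/(n-1)}$ in place of $B^{1/3}$ then supplies the factor $1/((n-1)^2\zeta(n-1))$ in $\mathcal{C}_n$. The main obstacle, and the only step that genuinely demands care, is identifying $\mathscr{C}_n$ as an explicit Euler product at the point $(s,w)=(1,n/2-1)$ and verifying that the analogues of the error bounds \eqref{UB_TR0}--\eqref{UB_TR3}---whose exponents shift linearly with $n/2-1$ together with the convergence half-planes---still beat the main term when specialised at $y\asymp x^2$; the convex bound \eqref{UB:zeta}, the boundedness of $\mathcal{G}_n$, and the elementary estimates of Lemma~\ref{Lem3.2} are all $n$-insensitive and require no modification.
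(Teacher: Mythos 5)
Your proposal is, in structure and in all the arithmetic bookkeeping, the same argument as the paper's: set up $\mathscr{F}(s,w)=\sum_m m^{-s}\sum_{d\mid m^3}r_n^*(d)d^{-w}$, factor out $\prod_{j=0}^3\zeta(s+jw-j(n/2-1))$ via the formal identities \eqref{Formal_Calcul_1}--\eqref{Formal_Calcul_2} with $z=p^{n/2-1}$, shift $w$ across the three poles $w_j=(n/2-1)+(1-s)/j$, observe that along $w_3$ the remaining zeta arguments $\bigl(s,\tfrac{2s+1}{3},\tfrac{s+2}{3}\bigr)$ are $n$-independent, take the triple pole at $s=1$ to get $X^2Y^{n/2}P_n(\psi)$, apply $\mathscr{D}$ to obtain $XY^{n/2-1}\bigl(nP_n+\tfrac{3n-4}{6}P_n'-\tfrac13P_n''\bigr)(\psi)$, run the dyadic argument for $T_n(B)$, and finish with M\"obius inversion in $B^{1/(n-1)}$. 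Your numerology checks out: for instance $\tfrac{n}{9}-\tfrac{2n}{9(3n-4)}=\tfrac{n(n-2)}{3(3n-4)}$ and the $(n-1)^2\zeta(n-1)$ denominator are both correct, and they reduce to $\tfrac13$ and $9\zeta(3)$ at $n=4$.

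There is one genuine gap in the opening step. You assert that the Jacobi--Glaisher formula ``for $4\mid n$ expresses $r_n(d)$ as a constant multiple of a restricted divisor sum'' and write the exact identity $r_n(d)=\tfrac{n}{B_{n/2}(2^{n/2}-1)}r_n^*(d)$. This is false once $n\ge 12$: the theta series $\theta^n$ of weight $n/2$ on $\Gamma_0(4)$ decomposes into an Eisenstein part, which gives the divisor-sum main term, \emph{plus} a cusp form, and the cusp form contributes an error of size $O_\varepsilon(d^{n/4-1/2+\varepsilon})$. Only for $n=4$ and $n=8$ is the cuspidal part absent. The paper works with the asymptotic \eqref{eq:rnd}, namely $r_n(d)=\tfrac{n}{|B_{n/2}|(2^{n/2}-1)}r_n^*(d)+O(d^{n/4-1/2})$, and then verifies in a one-line estimate that the error contributes $\ll B^{n/2}(\log B)^3$ to $N_n^*(B)$, which is dominated by the main term $B^{n-1}(\log B)^2$. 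Without that observation your identity $N_n^*(B)=\tfrac{2n}{B_{n/2}(2^{n/2}-1)}\bigl(S_n(B,B^2)-T_n(B)\bigr)$ is not an equality, and the rest of the argument would need the corresponding error term carried through. The fix is cheap, but it is a necessary step and your sketch omits it.

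A secondary point you flag but do not resolve is the verification that the analogues of the remainder bounds \eqref{UB_TR0}--\eqref{UB_TR3} remain subdominant after the exponent shifts by multiples of $n/2-1$. You are right that the zeta convexity bound and $\mathscr{G}_n\ll_\varepsilon 1$ are insensitive to $n$, but the contour heights and the choices of $H,J$ in Lemma~\ref{Operator/1} do need to be re-optimized for general $n$; the paper treats this as a routine ``modification of some parameters'', and so may you, but strictly speaking it is a claim that needs to be checked rather than asserted.
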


Since $n=4$ has been studied, we now suppoe $n=4k$ with $k\ge 2$, 
and define by $r_n(d)$ the number of integral solutions of the equation 
\begin{equation}\label{def:rnd}
d=y_1^2+\cdots+y_n^2
\quad\text{with}\quad
(y_1, \dots, y_n)\in \Z^n. 
\end{equation}
We apply (cf. \cite[Theorem 11.2]{Iwaniec}, \cite[page 155, Theorem 1]{Grosswald1985}) to deduce, similarly 
to \eqref{def:r4} and \eqref{def:r4*pmu}, that 
\begin{equation}\label{eq:rnd}
r_n(d)
= \frac{n}{|B_{n/2}|(2^{n/2}-1)} r_n^*(d) + O(d^{n/4-1/2}),
\end{equation}
where $B_n$ is the $n$th Bernoulli number and $r_n^*(d)$ is a multiplicative function determined by
the formulae \cite[page 163, the last formula]{Grosswald1985}:
\begin{equation}\label{def:rk*pmu}
\begin{aligned}
r_n^*(p^{\mu})
& =
\begin{cases}
1 + p^{2k-1} + \cdots + p^{\mu (2k-1)} 
& \text{if $p>2$}
\\\noalign{\vskip 2mm}
(-1)^k (-1 + 2^{2k-1} + \cdots + 2^{(\mu-1)(2k-1)}) + 2^{\mu(2k-1)}
& \text{if $p=2$}
\end{cases}
\\
& =
\begin{cases}
\dfrac{1-p^{(\mu+1)(2k-1)}}{1-p^{2k-1}}  
& \text{if $p>2$}
\\\noalign{\vskip 2mm}
\bigg(1-\dfrac{(-1)^{k}}{1-2^{2k-1}}\bigg)2^{\mu(2k-1)} - (-1)^{k} \dfrac{1-2^{2k}}{1-2^{2k-1}}      
& \text{if $p=2$}
\end{cases}
\end{aligned}
\end{equation}
for all integers $\mu\ge 1$. 

It is easy to see that contribution of the error term in \eqref{eq:rnd} to $N_n(B)$ is
$$
\ll \sum_{m\le B} \sum_{\substack{d\mid m^3\\ d\le B^2}} d^{n/4-1/2}
\ll B^{n/2-1} \sum_{m\le B} \tau(m^3)
\ll B^{n/2} (\log B)^3,
$$
which is acceptable.

Define the double Dirichlet series 
$$
\mathscr{F}(s, w)
:= \sum_{m\ge 1} m^{-s} \sum_{d\mid m^3} d^{-w} r_n^*(d).
$$
By \eqref{def:rk*pmu}, we can establish the next lemma 
in the same way as before.  

\begin{lemma}\label{Lem:Fsw:general}
Let $n=4k$ with $k\ge 2$.
For $\min_{0\le j\le 3} \re (s+jw-j(2k-1))>1$, 
we have
\begin{equation}\label{Expression:Fsw:general}
\mathscr{F}(s, w) = \prod_{0\le j\le 3} \zeta(s+jw-j(2k-1)) \mathscr{G}(s, w),
\end{equation}
where $\mathscr{G}(s, w)$ is an Euler product, given by \eqref{def:Gsw:general} below.
Further, for any $\varepsilon>0$, 
$\mathscr{G}(s, w)$ converges absolutely
for $\min_{0\le j\le 3} \re (s+jw-j(2k-1))\ge \tfrac{1}{2}+\varepsilon$ and
in this half-plane
\begin{equation}\label{UB:Gsw:general}
\mathscr{G}(s, w)\ll_{\varepsilon} 1.
\end{equation}
\end{lemma}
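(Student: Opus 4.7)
The argument is a direct generalization of the proof of Lemma \ref{Lem:Fsw}, with the substitution $z \mapsto p^{2k-1}$ in place of $z \mapsto p$ reflecting the higher ``weight'' $2k-1$ in the local values of $r_n^*$ when $n=4k$.

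First, by definition $r_n^*$ is multiplicative, and the explicit formulas \eqref{def:rk*pmu} yield the trivial bound $r_n^*(d)\ll d^{2k-1}\tau(d)$. Hence for $\re s$ sufficiently large the series $\mathscr{F}(s,w)$ admits the Euler product
\[
\mathscr{F}(s,w) = \prod_p \mathscr{F}_p(s,w), \qquad \mathscr{F}_p(s,w) := \sum_{\nu\ge 0} p^{-\nu s}\sum_{0\le\mu\le 3\nu} p^{-\mu w} r_n^*(p^\mu).
\]

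For each odd prime $p$, the formula \eqref{def:rk*pmu} gives $r_n^*(p^\mu)=(1-p^{(\mu+1)(2k-1)})/(1-p^{2k-1})$, which is precisely the shape handled by the formal identity \eqref{Formal_Calcul_1}. Applying that identity with the substitution $(x,y,z)=(p^{-s},p^{-w},p^{2k-1})$ extracts the four-fold zeta factor
\[
\prod_{j=0}^{3}\bigl(1-p^{-(s+jw-j(2k-1))}\bigr)^{-1}
\]
and leaves a remainder $\mathscr{G}_p(s,w)$ that is an explicit rational function of $p^{-s}$ and $p^{-w}$, in complete analogy with \eqref{def:Gpsw}.

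For the exceptional prime $p=2$, writing $q := 2^{2k-1}$, one reads off from \eqref{def:rk*pmu} that
\[
r_n^*(2^\mu) = \alpha_k\, q^\mu + \beta_k \qquad (\mu\ge 1), \qquad \alpha_k := \frac{q-1+(-1)^k}{q-1}, \quad \beta_k := \frac{(-1)^k(1-2q)}{q-1},
\]
with $r_n^*(1)=1$ treated separately. (For $k=1$ this recovers $\alpha_1=0$, $\beta_1=3$, consistent with \eqref{def:r4*pmu}.) The sum $\mathscr{F}_2(s,w)$ then splits into the contribution from $\mu=0$ (a factor $(1-2^{-s})^{-1}$) plus two geometric double sums coming from the $\alpha_k q^\mu$ and $\beta_k$ parts. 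Each of these can be evaluated by the same interchange-and-telescope trick used to derive \eqref{Formal_Calcul_1} and \eqref{Formal_Calcul_2}, and after collecting terms one obtains
\[
\mathscr{F}_2(s,w) = \prod_{j=0}^{3}\bigl(1-2^{-(s+jw-j(2k-1))}\bigr)^{-1}\,\mathscr{G}_2(s,w),
\]
with $\mathscr{G}_2(s,w)$ an explicit rational function of $2^{-s}$ and $2^{-w}$. Setting
\begin{equation}\label{def:Gsw:general}
\mathscr{G}(s,w) := \prod_p \mathscr{G}_p(s,w)
\end{equation}
then yields the factorization \eqref{Expression:Fsw:general}.

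To establish absolute convergence of \eqref{def:Gsw:general} and the bound \eqref{UB:Gsw:general}, I would verify, exactly as in the proof of Lemma \ref{Lem:Fsw}, that in the half-space $\min_{0\le j\le 3}\re(s+jw-j(2k-1))\ge \tfrac12+\varepsilon$ every nontrivial exponent that appears in the numerator of $\mathscr{G}_p(s,w)$ exceeds $1+\varepsilon$. This yields $|\mathscr{G}_p(s,w)|=1+O(p^{-1-\varepsilon})$ uniformly in the region, from which absolute convergence and the uniform bound follow; the factorization then extends to the full half-space by analytic continuation. The principal obstacle is the $p=2$ bookkeeping: since $r_n^*(2^\mu)$ is now a nontrivial linear combination of $q^\mu$ and a constant rather than the single constant $3$ that sufficed when $k=1$, one cannot quote \eqref{Formal_Calcul_2} verbatim but must combine two geometric-sum evaluations and then recognise the prescribed four zeta factors in the result. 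Once this step is carried out, the verification of the exponent inequalities in the final step is automatic, because the shift $j\mapsto j(2k-1)$ with $k\ge 2$ pushes the critical lines strictly to the right of those in the $k=1$ case.
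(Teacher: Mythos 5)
Your proposal is correct and follows essentially the same route as the paper: an Euler product, then the formal identity \eqref{Formal_Calcul_1} with $(x,y,z)=(p^{-s},p^{-w},p^{2k-1})$ at odd primes, a two-term geometric decomposition of $r_n^*(2^\mu)=\alpha_k q^\mu+\beta_k$ at $p=2$ (the paper packages this as a single formal identity with parameters $a=\alpha_k$, $b=-\beta_k$, but it is the same interchange-and-telescope computation), and finally the exponent inequalities using $k\ge 2$ to show $|\mathscr{G}_p(s,w)|=1+O(p^{-1-\varepsilon})$. Your sanity check that $(\alpha_1,\beta_1)=(0,3)$ recovers the $n=4$ case is a nice addition, though not in the paper.
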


\begin{proof}
Obviously functions $d\mapsto r_n^*(d)$ and  $m\mapsto m^{-s} \sum_{d\mid m^3} d^{-w} r_n^*(d)$ are multiplicative.
Since $r_n^*(d)\le d^{2k-1}\tau(d)$, 
for $\re s>6k-2$ and $\re w>0$ we can write the Euler product
$$
\mathscr{F}(s, w)
= \prod_p \sum_{\nu\ge 0} p^{-\nu s} \sum_{0\le \mu\le 3\nu} p^{- \mu w} r_n^*(p^{\mu})
=: \prod_p \mathscr{F}_p(s, w).
$$
Suppose $p>2$. In view of \eqref{def:rk*pmu},
we can apply \eqref{Formal_Calcul_1} with $(x, y, z) = (p^{-s}, p^{-w}, p^{2k-1})$ to write
$$
\mathscr{F}_p(s, w)
= \prod_{0\le j\le 3} \big(1-p^{-(s+jw-j(2k-1))}\big)^{-1} \mathscr{G}_p(s, w),
$$
where 
\begin{align*}
\mathscr{G}_p(s, w)
& := \bigg(
1
+ \frac{p^{2k-1}+1}{p^{s+w}}
+ \frac{p^{2(2k-1)}+p^{2k-1}+1}{p^{s+2w}}
+ \frac{p^{4k-2}+p^{2k-1}}{p^{s+3w}}
+ \frac{p^{4k-2}}{p^{2s+4w}}
\bigg)
\\
& \qquad
\times
\bigg(1-\frac{p^{2k-1}}{p^{s+w}}\bigg)
\bigg(1-\frac{p^{2(2k-1)}}{p^{s+2w}}\bigg)
\bigg(1-\frac{1}{p^{s+3w}}\bigg)^{-1}.
\end{align*}
On the other hand, a simple formal calculation shows
\begin{align*}
& 1 + \sum_{\nu\ge 1} x^{\nu} \Big(1 + \sum_{1\le \mu\le 3\nu} y^{\mu} (a z^{\mu} - b)\Big)
\\
& = \frac{1}{1-x}
+ \sum_{\nu\ge 1} x^{\nu} 
\bigg(a \frac{yz-(yz)^{3\nu+1}}{1-yz}-b\frac{y-y^{3\nu+1}}{1-y}\bigg)
\\\noalign{\vskip -0,6mm}
& = \frac{1}{1-x}
+ \frac{a}{1-yz} \bigg(\frac{xyz}{1-x} - \frac{xy^4z^4}{1-xy^3z^3}\bigg)
- \frac{b}{1-y} \bigg(\frac{xy}{1-x} - \frac{xy^4}{1-xy^3}\bigg)
\\\noalign{\vskip 0mm}
& = \frac{1}{1-x}
+ \frac{axyz(1+yz+y^2z^2)}{(1-x)(1-xy^3z^3)} 
- \frac{bxy(1+y+y^2)}{(1-x)(1-xy^3)}\cdot
\end{align*}
For $p=2$, this formula with 
$$
(x, y, z, a, b) 
= \bigg(2^{-s}, 2^{-w}, 2^{2k-1}, 
1-\dfrac{(-1)^{k}}{1-2^{2k-1}}, (-1)^{k} \dfrac{1-2^{2k}}{1-2^{2k-1}}\bigg)
$$ 
gives 
$$
\mathscr{F}_2(s, w)
= \prod_{0\le j\le 3} \big(1-2^{-(s+jw-j(2k-1))}\big)^{-1} \mathscr{G}_2(s, w),
$$
where
\begin{align*}
\mathscr{G}_2(s, w)
& := \prod_{1\le j\le 3} (1-2^{-(s+jw-j(2k-1))})
\\
& \quad
\times \bigg(1
+ a\frac{1+2^{-w+2k-1}+2^{-2w+2(2k-1)}}{2^{s+w-(2k-1)}-2^{-2w+2(2k-1)}} 
- b\frac{2^{-s-w}(1+2^{-w}+2^{-2w})}{1-2^{-s-3w}}\bigg). 
\end{align*}
These imply \eqref{Expression:Fsw:general} with
\begin{equation}\label{def:Gsw:general}
\mathscr{G}(s, w) := \prod_p \mathscr{G}_p(s, w)
\end{equation}
for $\re s>6k-2$ and $\re w>0$.

It remains to establish \eqref{UB:Gsw:general}.
We verify that
whenever $\displaystyle\min_{0\le j\le 3} (\sigma+ju-j(2k-1))\ge \tfrac{1}{2}+\varepsilon$ we have
\begin{align*}
2(\sigma+u-(2k-1))
& \ge 2(\tfrac{1}{2}+\varepsilon) = 1+2\varepsilon,
\\
2(\sigma+2u-2(2k-1))
& \ge 2(\tfrac{1}{2}+\varepsilon) = 1+2\varepsilon,
\\
\sigma+u
& \ge 2k-1+\tfrac{1}{2}+\varepsilon \ge \tfrac{7}{2}+\varepsilon,
\\
\sigma+2u
& \ge 2(2k-1)+\tfrac{1}{2}+\varepsilon \ge \tfrac{13}{2}+\varepsilon,
\\
\sigma+2u-(2k-1)
& \ge 2k-1+\tfrac{1}{2}+\varepsilon \ge \tfrac{7}{2}+\varepsilon,
\\
\sigma+3u-(2k-1)
& \ge 2(2k-1)+\tfrac{1}{2}+\varepsilon \ge \tfrac{13}{2}+\varepsilon,
\\
\sigma+3u-2(2k-1)
& \ge 2k-1+\tfrac{1}{2}+\varepsilon \ge \tfrac{7}{2}+\varepsilon,
\\
2(\sigma+2u-(2k-1))
& \ge 2(2k-1+\tfrac{1}{2}+\varepsilon) \ge 7+\varepsilon,
\\
\sigma+3u
& \ge 3(2k-1)+\tfrac{1}{2}+\varepsilon  \tfrac{13}{2}+\varepsilon.
\end{align*}
It follows that
\begin{align*}
|\mathscr{G}_p(s, w)|
& = 1 
+ O(p^{-1-\varepsilon})
\end{align*}
for $\min_{0\le j\le 3} \re(s+jw-j(2k-1))\ge \tfrac{1}{2}+\varepsilon$.
Hence the Euler product $\mathscr{G}(s, w)$ converges absolutely
for $\min_{0\le j\le 3} \re (s+jw-j(2k-1))\ge \tfrac{1}{2}+\varepsilon$, 
and \eqref{UB:Gsw:general} 
holds in this half-plane.
By analytic continuation, \eqref{Expression:Fsw:general} is also true in the same domain.
This completes the proof.
\end{proof}

Finally we give a sketch of the proof of Theorem~\ref{thm5}. 

\begin{proof}[Proof of Theorem~\ref{thm5}]  
With Lemma~\ref{Lem:Fsw} replaced by Lemma~\ref{Lem:Fsw:general}, we can establish 
Theorem \ref{thm5} in the same way as in Theorem~\ref{thm1}.  
What we have to do is just a modification of some parameters. 
For example we take, instead of \eqref{def:kappa_Lambda_L}, 
$$
\kappa := 1+\mathcal{L}^{-1},
\quad
\lambda := 2k-1+4\mathcal{L}^{-1}.
$$
Therefore instead of \eqref{def:wjs}, \eqref{def:Fk*} and \eqref{def:residue}, we have
$$
w_{j, k} = w_{j, k}(s) := (j(2k-1)+1-s)/j
\quad
(1\le j\le 3), 
$$
$$
\begin{cases}
\mathscr{F}_1^*(s)
 := \zeta(s) \zeta(2-s) \zeta(3-2s) \mathscr{G}(s, w_{1, k}(s)), 
\\\noalign{\vskip 1mm}
\mathscr{F}_2^*(s)
 := \zeta(s) \zeta(\tfrac{s+1}{2}) \zeta(\tfrac{3-s}{2}) \mathscr{G}(s, w_{2, k}(s)), 
\\\noalign{\vskip 1mm}
\mathscr{F}_3^*(s)
 := \zeta(s) \zeta(\tfrac{2s+1}{3}) \zeta(\tfrac{s+2}{3}) \mathscr{G}(s, w_{3, k}(s)), 
\end{cases}
$$
and
$$
\frac{\mathscr{F}_1^*(s)Y^{2k+1-s}}{(2k-s)(2k+1-s)}, 
\quad
\frac{4\mathscr{F}_2^*(s)Y^{(4k+1-s)/2}}{(4k-1-s)(4k+1-s)},
\quad
\frac{9\mathscr{F}_3^*(s)Y^{(6k+1-s)/3}}{(6k-2-s)(6k+1-s)}\cdot
$$
In the palce of \eqref{def:Pt} and \eqref{def:a2}, we have
$$
P_{4k}(t)
:= \frac{1}{2!} \bigg(
\frac{9(s-1)^3\mathscr{F}_3^*(s) \mathrm{e}^{t(s-1)}}{(6k-2-s)(6k+1-s)s(s+1)}\bigg)''\bigg|_{s=1}
$$
and
\begin{equation}\label{def:Cn}
\begin{aligned}
\mathscr{C}_{4k}
& := \frac{9}{16k(2k-1)} \mathscr{G}(1, 2k-1)
\\
& \;= \frac{9[(2^{2k+1}-4)(1-2^{-(6k-2)})+(-1)^k(2-2^{-2k}-2^{-4k+1}-2^{-6k+3})]}{128k(2k-1)(2^{2k-1}-1)} 
\\
& \qquad
\times
\zeta(6k-2)\prod_{p>2}\bigg(
1
+ \frac{2}{p}
+ \frac{3}{p^{2k}}
+ \frac{2}{p^{4k-1}}
+ \frac{1}{p^{4k}}
\bigg)
\bigg(1-\frac{1}{p}\bigg)^2.
\end{aligned}
\end{equation}
With the modifications above, one can establish Theorem~\ref{thm5} without 
difficulty. The details are omitted. \end{proof} 

\noindent 
{\bf Acknowledgements.} 
Zhao sincerely thanks his postdoc mentor David McKinnon for introducing him to this fantastic area of mathematics,  and for teaching him lots of algebraic geometry.  All the authors are grateful for his important contribution to this paper. We would also like to thank  Brian Conrey,  Sary Drappeau,  Lei Fu, Yuri I. Manin,  Xuanyu Pan, Per Salberger,  Mingmin Shen,  Yuri Tschinkel,   Fei Xu,  Takehiko Yasuda  and Yi Zhu  for helpful discussions.  

Liu is supported in part by NSFC grant 11531008, and Liu and Wu are supported in part by 
IRT$_{-}$16R43 from the Ministry of Education. A part of this work was done during Zhao's stay at the Max Planck Institute for Mathematics. He is very grateful for the Institute's hospitality and excellent working conditions.

\vskip 10mm

\end{document}